\newtheorem{theorem}{Theorem}[section]
\newtheorem{corollary}[theorem]{Corollary}
\newtheorem{proposition}[theorem]{Proposition}
\newtheorem{lemma}[theorem]{Lemma}
\newtheorem{definition}[theorem]{Definition}
\newtheorem{defprop}[theorem]{Definition/Proposition}
\newtheorem*{rep@theorem}{\rep@title} \newcommand{\newreptheorem}[2]{%
\newenvironment{rep#1}[1]{%
\def\rep@title{\bf #2 \ref{##1} }%
\begin{rep@theorem} }%
{\end{rep@theorem} } }
\theoremstyle{definition}
\newtheorem{example}[theorem]{Example}
\newcommand{\conv}{\textrm{conv}}
\newcommand{\EVol}{\mathrm{EVol}}
\newcommand{\Vol}{\mathrm{Vol}}
\newcommand{\Span}{\mathrm{span}}
\newcommand{\children}{\mathrm{children}}
\newcommand{\rank}{\mathrm{rank}}   
\renewcommand{\Vert}{\mathrm{Vert}}
\newcommand{\id}{\mathrm{id}}
\newcommand{\inv}{\mathrm{inv}}
\newcommand{\N}{\mathbb{N}}
\newcommand{\Z}{\mathbb{Z}}
\newcommand{\R}{\mathbb{R}}
\newcommand{\w}{\overline{w}}
\begin{document}
\title{\textsf{The equivariant volumes of the permutahedron}}
\author{
\textsf{Federico Ardila\footnote{\noindent \textsf{San Francisco State University; Mathematical Sciences Research Institute; U. de Los Andes; federico@sfsu.edu.}}}
\and
\textsf{Anna Schindler\footnote{\noindent \textsf{San Francisco State University; North Seattle College; anna.schindler@seattlecolleges.edu.}}}
\and
\textsf{Andr\'es R. Vindas-Mel\'endez\footnote{\noindent \textsf{San Francisco State University; University of Kentucky; andres.vindas@uky.edu.
\newline 
The authors were supported by NSF Awards DMS-1855610 and DMS-1600609,  NSF Award DMS-1440140 to MSRI, and the Simons Foundation (FA), an ARCS Foundation Fellowship (AS), and NSF Graduate Research Fellowship DGE-1247392 
(ARVM).}}}}
\date{}
\maketitle

\begin{abstract} 
We prove that if $\sigma$ is a permutation of $S_n$ with $m$ cycles of lengths $l_1, \ldots, l_m$, the subset of the permutahedron $\Pi_n$ fixed by the natural action of $\sigma$ is a polytope with volume $n^{m-2} \gcd(l_1, \ldots, l_m)$.
\end{abstract}

\section{Introduction}

The $n$-permutahedron is the polytope in $\R^n$
whose vertices are the $n!$ permutations of $[n]:=\{1, \ldots, n\}$:
\[
\Pi_n := \conv \left\{(\pi(1), \pi(2), \dots,\pi(n)): \pi \in S_n\right\}.
\]
The symmetric group $S_n$ acts on $\Pi_n \subset \R^n$ by permuting coordinates; more precisely, a permutation $\sigma \in S_n$ acts on a point $x = (x_1, x_2, \dots, x_n) \in \Pi_n$, by $\sigma \cdot x := (x_{\sigma^{-1}(1)}, x_{\sigma^{-1}(2)}, \dots, x_{\sigma^{-1}(n)})$.

\begin{definition}
The \emph{fixed polytope} or \emph{slice  of the permutahedron $\Pi_n$ fixed by a permutation $\sigma$ of $[n]$} is
\[
\Pi_n^\sigma = \{x \in \Pi_n \, : \, \sigma \cdot x = x\}.
\]
\end{definition}


\begin{figure}[h]
    \centering
    \includegraphics[scale=.54]{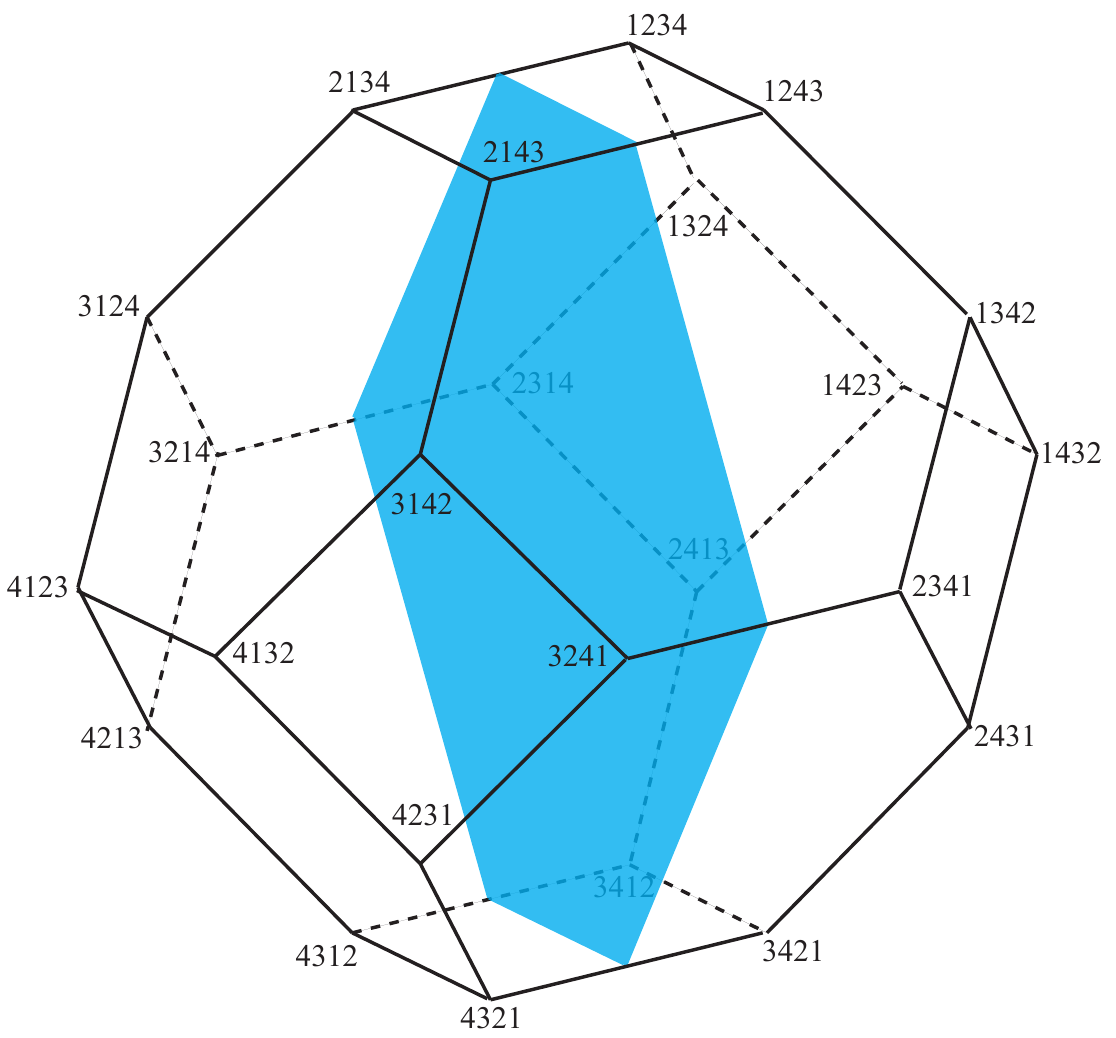} 
    \caption{The slice $\Pi_4^{(12)}$ of the permutahedron $\Pi_4$ fixed by $(12) \in S_4$ is a hexagon.}
    \label{fig:fixedby(12)}
\end{figure}

%

The main result of this note is a generalization of the fact, due to Stanley \cite{Stanleyzonotope}, that $\Vol\, \Pi_n$ equals $n^{n-2}$, the number of spanning trees on $[n]$.

\begin{theorem} \label{thm:main}
If $\sigma$ is a permutation of $[n]$ whose cycles have lengths $l_1, \ldots, l_m$, then the normalized volume of the slice of $\Pi_n$ fixed by $\sigma$ is 
\[
\Vol\,  \Pi_n^\sigma = n^{m-2} \gcd(l_1, \ldots, l_m).
\]
\end{theorem}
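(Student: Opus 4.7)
The plan is to realize $\Pi_n^\sigma$ as a projected zonotope and evaluate its volume via the graphic matroid of $K_m$. The starting observation is that $\Pi_n^\sigma = \pi(\Pi_n)$, where $\pi \colon \R^n \to V = \{x : \sigma \cdot x = x\}$ is the orthogonal projection onto the fixed subspace. Indeed, $\pi$ equals the averaging operator $\frac{1}{k}\sum_{j=0}^{k-1} \sigma^j \cdot (-)$ (for $k$ the order of $\sigma$), so the $S_n$-invariance of $\Pi_n$ together with convexity gives $\pi(\Pi_n) \subseteq \Pi_n^\sigma$; the reverse inclusion is trivial. Since $\Pi_n$ is a translate of Stanley's zonotope $Z_n = \sum_{i<j}[0, e_i - e_j]$, we obtain, up to translation,
\[\Pi_n^\sigma \;=\; \sum_{i<j}[0, \pi(e_i - e_j)] \;=\; \sum_{1\le I < J \le m} l_I l_J\, [0, v_{IJ}], \qquad v_{IJ} := \tfrac{1}{l_I}\mathbf{1}_{C_I} - \tfrac{1}{l_J}\mathbf{1}_{C_J},\]
where $C_I$ denotes the $I$-th cycle; terms with $i,j$ in the same cycle project to zero, and the remaining ones depend only on the unordered pair of cycles.

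The vectors $\{v_{IJ}\}_{I<J}$ satisfy the cocycle relation $v_{IJ} + v_{JK} = v_{IK}$, so they realize the graphic matroid of $K_m$; its bases are the spanning trees. Let $\Lambda^\sigma := \Z^n \cap V \cap \{\sum x_i = 0\}$. Applying Stanley's zonotope volume formula to the rescaled, integer-endpoint segments $[0, l_I l_J v_{IJ}] = [0, l_J\mathbf{1}_{C_I} - l_I\mathbf{1}_{C_J}]$ yields
\[\Vol \Pi_n^\sigma \;=\; V_* \cdot \sum_T \prod_{IJ \in T} l_I l_J,\]
where $T$ ranges over spanning trees of $K_m$ and $V_*$ is the $\Lambda^\sigma$-normalized $(m-1)$-volume of the parallelepiped spanned by $\{v_{IJ} : IJ \in T\}$, which the cocycle identity forces to be independent of $T$ (the tree-to-tree change of basis is unimodular).

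Two computations finish the argument. Stratifying spanning trees by degree sequence and using the multinomial theorem gives the Cayley-style identity $\sum_T \prod_{IJ \in T} l_I l_J = \sum_T \prod_I l_I^{\deg_T(I)} = \prod_I l_I \cdot (l_1+\cdots+l_m)^{m-2} = n^{m-2}\prod_I l_I$. For $V_*$, I would use the star tree at vertex $1$: a wedge-product computation shows its parallelepiped has Euclidean $(m-1)$-volume $\|(l_1,\ldots,l_m)\|/\prod_I l_I$, while in cycle-averaged coordinates $\Lambda^\sigma$ identifies with $\Z^m \cap \{\sum l_I y_I = 0\}$, of covolume $\|(l_1,\ldots,l_m)\|/\gcd(l_1,\ldots,l_m)$, giving $V_* = \gcd(l_1,\ldots,l_m)/\prod_I l_I$. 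Multiplying yields $\Vol \Pi_n^\sigma = n^{m-2}\gcd(l_1,\ldots,l_m)$. The principal obstacle is the $V_*$ computation: this is where the $\gcd$ enters, as the primitive denominator of $(l_1,\ldots,l_m) \in \Z^m$ when restricted to the hyperplane $\{\sum l_I y_I = 0\}$; the matroid structure and the tree-weighting identity are classical once the projection $\pi$ is in place.
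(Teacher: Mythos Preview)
Your argument is correct and follows essentially the same architecture as the paper: realize $\Pi_n^\sigma$ as a zonotope, apply the Stanley/Shephard volume formula so that bases correspond to spanning trees of $K_m$, compute the volume of a single tree parallelotope (where the $\gcd$ enters), and then sum over trees via the Cayley identity $\sum_T \prod_i l_i^{\deg_T(i)-1} = n^{m-2}$. The two differences are cosmetic: you obtain the zonotope description directly by pushing the Minkowski decomposition $\Pi_n = \sum_{i<j}[e_j,e_i] + (\text{shift})$ through the averaging projection $\pi$ (the paper instead proves this as part of its three-description Theorem~\ref{thm:big}), and you compute the tree parallelotope volume as a Euclidean-volume/lattice-covolume ratio in the $y$-coordinates rather than as a gcd of maximal minors of the integer matrix for the claw tree --- both are the same quantity $\gcd(l_1,\dots,l_m)/(l_1\cdots l_m)$.
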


This is the first step towards describing the equivariant Ehrhart theory of the permutahedron, a question posed by Stapledon \cite{Stapledon}.

\subsection{Normalizing the volume}

The permutahedron and its fixed polytopes are not full-dimensional; we must define their volumes carefully. We normalize volumes so that every primitive parallelotope has volume 1. This is the normalization under which the volume of $\Pi_n$ equals $n^{n-2}$. 

More precisely, let $P$ be a $d$-dimensional polytope on an affine $d$-plane $L \subset \Z^n$. Assume $L$ is integral, in the sense that $L \cap \Z^n$ is a lattice translate of a $d$-dimensional lattice $\Lambda$. We call a lattice $d$-parallelotope in $L$ \emph{primitive} if its edges generate the lattice $\Lambda$; all primitive parallelotopes have the same volume. Then we define the volume of a $d$-polytope $P$ in $L$ to be $\Vol(P) := \EVol(P)/\EVol(\square)$ for any primitive parallelotope $\square$ in $L$, where $\EVol$ denotes Euclidean volume. By convention, the normalized volume of a point is $1$.

The definition of $\Vol(P)$  makes sense even when $P$ is not an integral polytope. This is important for us because the fixed polytopes of the permutahedron are 
not necessarily integral.

\subsection{Notation}

We identify each permutation $\pi \in S_n$ with the point $(\pi(1), \ldots, \pi(n))$ in $\R^n$. When we write permutations in cycle notation, we do not use commas to separate the entries of each cycle. For example, we identify the permutation $246513$ in $S_6$ with the point $(2,4,6,5,1,3) \in \R^6$, and write it as $(1245)(36)$ in cycle notation.

Our main object of study is the fixed polytope $\Pi_n^\sigma$ for a permutation $\sigma \in S_n$. We assume that $\sigma$ has $m$ cycles $\sigma_1, \ldots, \sigma_m$ of lengths $l_1 \geq \cdots \geq l_m$. 

We let $\{e_1, \ldots, e_n\}$ be the standard basis of $\R^n$, and $e_S := e_{s_1} + \cdots + e_{s_k}$ for $S = \{s_1, \ldots, s_k\} \subseteq [n]$. Recall that the Minkowski sum of polytopes $P, \, Q \subset \R^n$ is the polytope $P+Q := \{p + q \, : \, p \in P, q \in Q\} \subset \R^n$. \cite{Grunbaum}

\subsection{Organization}

Section \ref{sec:threedescriptions} is devoted to proving Theorem \ref{thm:big}, which describes the fixed polytopes $\Pi_n^\sigma$ in terms of its vertices, its defining inequalities, and a Minkowski sum decomposition. Section \ref{sec:volume} uses this to prove our main result, Theorem \ref{thm:main}, that the normalized volume of $\Pi_n^\sigma$ is $n^{m-2} \gcd(l_1, \ldots, l_m)$. Section \ref{sec:closing} contains some closing remarks. These include a connection with Reiner's theory of equivariant subdivisions of polytopes, and a discussion of the slice of the permutahedron fixed by a subgroup of $S_n$.

\section{Three descriptions of the fixed polytopes of the permutahedron} \label{sec:threedescriptions}


\begin{proposition}\cite{Ziegler} \label{thm:perm}
The permutahedron $\Pi_n$ can be described in the following three ways:

\begin{enumerate}

\item (Inequalities)
It is the set of points $x \in \R^n$ satisfying
\begin{enumerate}
\item $x_1 + x_2 + \cdots + x_n = 1 + 2 + \cdots + n$, and
\item $x_{i_1} + x_{i_2} + \cdots + x_{i_k} \geq 1 + 2 + \cdots + k$ 
for any subset $\{i_1, i_2, \dots, i_k\} \subset \{1,2, \dots, n\}$.
\end{enumerate}

\item (Vertices)
It is the convex hull of the points $(\pi(1), \ldots, \pi(n))$ as $\pi$ ranges over the permutations of $[n]$.

\item (Minkowski sum)
It is the Minkowski sum $\displaystyle \sum_{1 \leq j < k \leq n} [e_k, e_j] + \sum_{1 \leq k \leq n} e_k.$
\end{enumerate}
The $n$-permutahedron is $(n-1)$-dimensional and every permutation of $[n]$  is indeed a vertex.
\end{proposition}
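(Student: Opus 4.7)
The plan is to prove the three descriptions are equivalent in the cycle $(2) \Rightarrow (1)$, $(1) \Rightarrow (2)$, $(2) \Leftrightarrow (3)$, and then extract the dimension and vertex statements. Write $P_1, P_2, P_3$ for the three candidate sets.

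The direction $(2) \Rightarrow (1)$ is immediate: each permutation point $(\pi(1), \ldots, \pi(n))$ satisfies (1a) on the nose, and for any $k$-subset $\{i_1, \ldots, i_k\}$ the values $\pi(i_1), \ldots, \pi(i_k)$ form a $k$-element subset of $[n]$ whose sum is at least $1 + \cdots + k$; since (1a) and (1b) are linear, they pass to $\conv$. The equivalence $(2) \Leftrightarrow (3)$ I would obtain by computing the vertices of the zonotope $P_3$ via linear optimization: for generic $c \in \R^n$ with distinct coordinates, $\max_{x \in [e_k, e_j]} c \cdot x$ is $e_j$ if $c_j > c_k$ and $e_k$ otherwise, so the $c$-maximizing vertex of $P_3$ has $i$-th coordinate $1 + |\{l \neq i : c_l < c_i\}|$, i.e.\ the rank of $c_i$ among the coordinates of $c$. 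As $c$ varies, its rank pattern ranges over all permutations of $[n]$, identifying the vertex set of $P_3$ with $P_2$.

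The main obstacle is $(1) \Rightarrow (2)$, i.e.\ showing every vertex of $P_1$ is a permutation point. Boundedness of $P_1$ is easy: applying (1b) to the complement of any subset and subtracting from (1a) bounds every partial sum from above. Let $v$ be a vertex of $P_1$ and let $\mathcal{T}$ be the collection of $S \subseteq [n]$ for which (1b) is tight at $v$. The key input is that the function $f(S) := 1 + 2 + \cdots + |S|$ is supermodular: adding the tight equations for $S, T \in \mathcal{T}$ and combining with the valid inequalities (1b) for $S \cup T$ and $S \cap T$ forces the latter two to be tight as well, so $\mathcal{T}$ is closed under union and intersection. Since $v$ is pinned down by $n-1$ linearly independent tight inequalities inside the hyperplane (1a), one can extract a maximal chain $\emptyset = S_0 \subsetneq S_1 \subsetneq \cdots \subsetneq S_n = [n]$ in $\mathcal{T}$ with $|S_k| = k$. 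Subtracting consecutive tight equations $\sum_{i \in S_k} v_i = 1 + \cdots + k$ then yields $v_{i_k} = k$ where $\{i_k\} = S_k \setminus S_{k-1}$, exhibiting $v$ as a permutation point.

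Finally, the dimension claim follows because $P_1$ lies in the hyperplane (1a), while $P_2$ contains pairs of permutation points differing by $e_k - e_{k+1}$ for every $k = 1, \ldots, n-1$, whose span is the direction of (1a). That every permutation $\pi$ is a vertex of $P_2$ is then immediate from the computation in $(2) \Leftrightarrow (3)$: the cost vector $c := (\pi(1), \ldots, \pi(n))$ makes $\pi$ the unique maximizer of $c \cdot x$ on $P_2$.
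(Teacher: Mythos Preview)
The paper does not prove this proposition at all: it is stated with a citation to Ziegler's textbook and immediately followed by ``Our first goal is to prove the analogous result for the fixed polytopes of $\Pi_n$.'' So there is no proof in the paper to compare against; the authors treat the three descriptions of $\Pi_n$ as background.

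Your argument is correct and is essentially the standard one. The implication $(2)\Rightarrow(1)$ and the zonotope computation for $(2)\Leftrightarrow(3)$ are clean; your optimization argument for $(3)$ in fact reappears in the paper's proof of Theorem~\ref{thm:big}, part B, in the more general $\sigma$-fixed setting. The supermodularity/uncrossing argument for $(1)\Rightarrow(2)$ is the right idea. The one step you have compressed is ``one can extract a maximal chain $\emptyset=S_0\subsetneq\cdots\subsetneq S_n=[n]$ in $\mathcal{T}$'': this uses that, once $\mathcal{T}$ is closed under $\cup$ and $\cap$, every $S\in\mathcal{T}$ is a union of the consecutive differences $T_i\setminus T_{i-1}$ of any maximal chain in $\mathcal{T}$, so the indicator vectors $\{\mathbbm{1}_S:S\in\mathcal{T}\}$ span a space of dimension equal to the length of that chain; since a vertex needs this span to be all of $\R^n$, the chain must have length $n$. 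It would be worth spelling that out in a final write-up, but the plan is sound.
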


Our first goal is to prove the analogous result for the fixed polytopes of $\Pi_n$; we do so in Theorem \ref{thm:big}.

\subsection{Standardizing the permutation}

We define the \emph{cycle type} of a permutation $\sigma$ to be the partition of $n$ consisting of the lengths $l_1 \geq \cdots \geq l_m$ of the cycles $\sigma_1, \ldots, \sigma_m$ of $\sigma$.

\begin{lemma}\label{lemma:conjugation}
The volume of $\Pi_n^\sigma$ only depends on the cycle type  of $\sigma$. 
\end{lemma}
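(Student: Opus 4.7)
My plan is to use the fact that two permutations of $[n]$ have the same cycle type if and only if they are conjugate in $S_n$. So given $\sigma$ and $\tau$ with the same cycle type, I can write $\tau = \rho \sigma \rho^{-1}$ for some $\rho \in S_n$, and the goal becomes showing that $\rho$ carries $\Pi_n^\sigma$ to $\Pi_n^\tau$ in a volume-preserving way.

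First I would check that the natural action of $\rho$ on $\R^n$ does the job at the set level. The permutahedron $\Pi_n$ is $S_n$-invariant, so $\rho \cdot \Pi_n = \Pi_n$. If $x \in \Pi_n^\sigma$, then using that the coordinate-permutation formula gives a left action,
\[
\tau \cdot (\rho \cdot x) = (\tau \rho) \cdot x = (\rho \sigma) \cdot x = \rho \cdot (\sigma \cdot x) = \rho \cdot x,
\]
so $\rho \cdot x \in \Pi_n^\tau$. The same argument applied to $\rho^{-1}$ gives the reverse inclusion, so $\rho$ restricts to a bijection $\Pi_n^\sigma \to \Pi_n^\tau$.

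Next I would argue that this bijection preserves normalized volume. The map $x \mapsto \rho \cdot x$ is a linear isometry of $\R^n$ that permutes the standard basis, hence restricts to a bijection of $\Z^n$ with itself. Letting $L_\sigma$ and $L_\tau$ denote the affine planes spanned by $\Pi_n^\sigma$ and $\Pi_n^\tau$, the map $\rho$ sends $L_\sigma$ to $L_\tau$ and identifies $L_\sigma \cap \Z^n$ with $L_\tau \cap \Z^n$, hence identifies the lattices $\Lambda_\sigma$ and $\Lambda_\tau$ used to normalize volume in Section~1.1. A primitive parallelotope in $L_\sigma$ is thereby mapped to a primitive parallelotope in $L_\tau$ of the same Euclidean volume, so the normalization factor $\EVol(\square)$ is the same on both sides and
\[
\Vol(\Pi_n^\sigma) = \EVol(\Pi_n^\sigma)/\EVol(\square_\sigma) = \EVol(\Pi_n^\tau)/\EVol(\square_\tau) = \Vol(\Pi_n^\tau).
\]

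There is no real obstacle here; the only subtle point is keeping the bookkeeping straight between the two different lattices $\Lambda_\sigma$ and $\Lambda_\tau$ used for normalization, and making sure that the rigid motion $\rho$ matches them up. Once that is verified, the lemma follows immediately from the classical fact that conjugacy classes in $S_n$ are exactly cycle types.
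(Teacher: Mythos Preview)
Your proposal is correct and follows essentially the same approach as the paper: use that permutations of the same cycle type are conjugate, show $\rho \cdot \Pi_n^\sigma = \Pi_n^{\rho\sigma\rho^{-1}}$, and conclude by observing that $\rho$ acts isometrically. You are in fact slightly more careful than the paper, since you explicitly check that $\rho$ matches the lattices $\Lambda_\sigma$ and $\Lambda_\tau$ used to normalize volume, whereas the paper only invokes isometry.
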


\begin{proof}
Two permutations of $S_n$ have the same cycle type if and only if they are conjugate \cite{Sagan}. For any two conjugate permutations $\sigma$ and $\tau\sigma\tau^{-1}$ (where $\sigma, \tau \in S_n$) we have
\begin{equation}\label{eq:conjugate}
\Pi_n^{\tau\sigma\tau^{-1}} = \tau \cdot \Pi_n^\sigma.
\end{equation}
Every permutation $\tau \in S_n$ acts isometrically on $\R^n$ because $S_n$ is generated by the transpositions $(i \,\,\,\, i+1)$ for $1 \leq i \leq n-1$, which act as reflections across the hyperplanes $x_i = x_{i+1}$. It follows from \eqref{eq:conjugate} that the fixed polytopes $\Pi_n^{\tau\sigma\tau^{-1}}$ and $\Pi_n^\sigma$ have the same volume, as desired.
\end{proof}

We wish to understand the various fixed polytopes of $\Pi_n$, and \eqref{eq:conjugate} shows that we can focus our attention on the slices $\Pi_n^\sigma$ fixed by a permutation of the form 
\begin{equation}\label{eq:sigma}
\sigma=(1 \,\,\, 2\,\,\,  \ldots\,\,\,  l_1)(l_1+1 \,\,\, l_1+2 \,\,\, \ldots \,\,\,   l_1+l_2) \cdots (l_1+\cdots+l_{m-1}+1 \,\,\,  \ldots \,\,\, n-1 \,\,\,  n)
\end{equation}
for a partition $l_1 \geq l_2 \geq \cdots \geq l_m$ with $l_1+\cdots+l_m = n$. 
We do so from now on.

\subsection{The inequality description}

\begin{lemma} \label{lemma:eqs}
For a permutation $\sigma \in S_n$, the fixed polytope $\Pi_n^\sigma$ consists of the points $x \in \Pi_n$ satisfying $x_j = x_k$ for any $j$ and $k$ in the same cycle of $\sigma$.
\end{lemma}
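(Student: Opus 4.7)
The plan is to unwind the definition of the $S_n$-action on $\R^n$ and translate the fixed-point equation $\sigma \cdot x = x$ into an equivalent set of coordinate equalities. The statement is essentially a bookkeeping lemma, so I would not expect any significant obstacle; the only care required is in handling the $\sigma^{-1}$ in the definition of the action.

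First, I would recall that by definition $\sigma \cdot x = (x_{\sigma^{-1}(1)}, \ldots, x_{\sigma^{-1}(n)})$, so the condition $\sigma \cdot x = x$ becomes $x_{\sigma^{-1}(i)} = x_i$ for every $i \in [n]$. Substituting $i \mapsto \sigma(i)$ in the index, this is equivalent to
\[
x_i = x_{\sigma(i)} \quad \text{for every } i \in [n].
\]
Thus a point $x \in \Pi_n$ is fixed by $\sigma$ if and only if this system of equalities holds.

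Second, I would invoke the standard fact that the orbits of the cyclic group $\langle \sigma \rangle$ acting on $[n]$ are precisely the cycles of $\sigma$. Iterating the equation $x_i = x_{\sigma(i)}$ gives $x_i = x_{\sigma^k(i)}$ for every $k \geq 0$, so if $x$ is fixed by $\sigma$ then $x_j = x_k$ whenever $j$ and $k$ lie in the same cycle. Conversely, if $x$ is constant on every cycle of $\sigma$, then in particular $x_i = x_{\sigma(i)}$ for all $i$ (since $i$ and $\sigma(i)$ always belong to the same cycle), so $\sigma \cdot x = x$. Combining the two directions yields the claim. No deeper ingredient is needed, and the statement is purely set-theoretic; the polytope structure of $\Pi_n$ plays no role beyond the ambient requirement $x \in \Pi_n$.
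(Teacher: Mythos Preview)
Your proof is correct and follows essentially the same approach as the paper: both arguments unwind the definition of the action to obtain $x_i = x_{\sigma^{-1}(i)}$ (equivalently $x_i = x_{\sigma(i)}$) for all $i$, and then use transitivity along each cycle to conclude. The only cosmetic difference is that the paper introduces the index $h = \sigma^{-1}(i)$ explicitly, whereas you substitute $i \mapsto \sigma(i)$; the content is identical.
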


\begin{proof}
Suppose that $x \in \Pi_n^{\sigma}$. First, let $h$ and $i$ be adjacent entries in a cycle $\sigma_a$ of $\sigma$, with $\sigma(h) = i$. Since $\sigma \cdot x = x$, we have
\[
x_i = (\sigma \cdot x)_i = x_{\sigma^{-1}(i)} = x_h.
\]
This holds for any adjacent entries of $\sigma_a$, so by transitivity $x_j=x_k$ for  any two entries $j,k$ of $\sigma_a$.

Conversely, suppose $x \in \Pi_n$ is such that $x_j=x_k$ whenever $j$ and $k$ are in the same cycle of $\sigma$. For any $1 \leq i \leq n$, let $h$ be the index preceding $i$ in the appropriate cycle of $\sigma$, so $\sigma(h)=i$. Then we have that $(\sigma \cdot x)_i = x_{\sigma^{-1}(i)} = x_h = x_i$. Since this holds for any index $i$, we have $\sigma \cdot x = x$ as desired.
\end{proof}

Geometrically, Lemma \ref{lemma:eqs} tells us that the fixed polytope $\Pi_n^\sigma$ is the slice of $\Pi_n$ cut out by the hyperplanes $x_j=x_k$ for all pairs $j,k$ such that $j$ and $k$ are in the same cycle of $\sigma$. 
For example, the fixed polytope $\Pi_4^{(12)}$ is the intersection of $\Pi_4$ with the hyperplane $x_1=x_2$, as shown in Figure \ref{fig:fixedby(12)}.

\begin{corollary}
If a permutation $\sigma$ of $[n]$ has $m$ cycles then $\Pi_n^{\sigma}$ has dimension $m-1$. 
\end{corollary}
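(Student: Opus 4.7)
The plan is to combine Lemma~\ref{lemma:eqs} with a dimension count on the ambient affine subspace, then exhibit a point of $\Pi_n^\sigma$ lying in its relative interior so that the upper bound on dimension is tight.

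First, I would set up the ambient space. By Proposition~\ref{thm:perm}(1), the permutahedron $\Pi_n$ is a full-dimensional polytope inside the hyperplane $H = \{x \in \R^n : x_1 + \cdots + x_n = \binom{n+1}{2}\}$, so $\dim \Pi_n = n-1$. By Lemma~\ref{lemma:eqs}, the fixed polytope lies in the affine subspace
\[
L = H \cap \bigl\{x \in \R^n : x_j = x_k \text{ whenever } j, k \text{ lie in the same cycle of } \sigma\bigr\}.
\]
Points of $L$ are determined by assigning a single value to each of the $m$ cycles of $\sigma$, subject to the one linear constraint $\sum_i x_i = \binom{n+1}{2}$. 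Equivalently, the $n-m$ equations $x_j = x_k$ imposed within cycles are linearly independent, so $\dim L = (n-1) - (n-m) = m-1$. This gives $\dim \Pi_n^\sigma \leq m-1$.

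For the reverse inequality, I would use an averaging argument. Since $S_n$ acts isometrically on $H$ (as noted in the proof of Lemma~\ref{lemma:conjugation}), the linear map
\[
A \colon H \longrightarrow H, \qquad A(x) = \frac{1}{|\langle \sigma \rangle|} \sum_{k=0}^{|\langle\sigma\rangle|-1} \sigma^k \cdot x
\]
is a well-defined projection onto the fixed subspace, which is precisely $L$. Because $\Pi_n$ is $\sigma$-invariant and convex, $A$ sends $\Pi_n$ into $\Pi_n \cap L = \Pi_n^\sigma$, and since $A$ fixes $L$ pointwise we have $A(\Pi_n) = \Pi_n^\sigma$. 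Now $\Pi_n$ is full-dimensional in $H$, so it contains a relatively open ball in $H$; applying the surjective linear map $A|_H \colon H \twoheadrightarrow L$ to such a ball produces a relatively open set inside $L$. Therefore $\Pi_n^\sigma$ has nonempty relative interior in $L$, so $\dim \Pi_n^\sigma = \dim L = m-1$.

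There is no real obstacle here: the only point to watch is that the $n - m$ equations $x_j = x_k$ within cycles, together with the hyperplane equation for $H$, are actually linearly independent (which is immediate since they involve disjoint blocks of coordinates), and that the averaging operator really lands inside $\Pi_n^\sigma$ (which follows from convexity plus $\sigma$-invariance of $\Pi_n$). Alternatively, one could skip the averaging map entirely and simply observe that the centroid $(\tfrac{n+1}{2},\ldots,\tfrac{n+1}{2})$ lies in $\Pi_n^\sigma$ and then produce $m-1$ further affinely independent fixed points by averaging suitable vertices $\pi \in S_n$ over $\langle \sigma \rangle$, but the projection argument above is cleaner and will be useful again in the Minkowski sum description of Theorem~\ref{thm:big}.
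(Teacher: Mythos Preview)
Your proof is correct, and the upper bound argument matches the paper's exactly: both cut $\Pi_n$ by the $n-m$ independent hyperplanes $x_j = x_k$ coming from Lemma~\ref{lemma:eqs} and read off $\dim L = m-1$.

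For the lower bound the paper simply asserts that this is a ``transversal intersection'' and concludes $\dim \Pi_n^\sigma = \dim \Pi_n - (n-m)$; you instead use the $\sigma$-averaging projection $A$ to show $A(\Pi_n) = \Pi_n^\sigma$ is full-dimensional in $L$. Your argument is more explicit (the paper's transversality claim implicitly relies on the fixed subspace meeting the relative interior of $\Pi_n$, e.g.\ at the centroid, which it does not spell out), and it has the pleasant side effect of previewing the averaging map that the paper introduces immediately afterward in Definition/Proposition~\ref{def:average} and Lemma~\ref{lem:averaging}. So the two proofs are morally the same, with yours trading a one-line appeal to transversality for a short self-contained argument that also sets up later machinery.
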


\begin{proof}
Let $\sigma=\sigma_1 \cdots \sigma_m$ be the cycle decomposition of $\sigma$. A cycle $\sigma_j = (a_1\ a_2\ \cdots\ a_{l_j})$ of length $l_j$ imposes $l_j-1$ linear conditions on a point $x$ in the fixed polytope, namely $x_{a_1} = x_{a_2} = \cdots = x_{a_{l_j}}$. Because $\sigma$ has $m$ cycles whose lengths add up to $n$, we have a total of $n-m$ such conditions, and they are linearly independent. The fixed polytope $\Pi_n^{\sigma}$ is the transversal intersection of $\Pi_n$ with these $n-m$ linearly independent hyperplanes, so $\dim \Pi_n^{\sigma} = \dim \Pi_n - (n-m) = m-1$.
\end{proof}

\subsection{Towards a vertex description} \label{sec:vertices}

In this section we describe a set $\Vert(\sigma)$ of $m!$ points associated to a permutation $\sigma$ of $S_n$. We will show in Theorem \ref{thm:big} that this is the set of vertices of the fixed polytope $\Pi_n^\sigma$. 

\begin{definition}\label{def:v}
The set $\Vert(\sigma)$ of $\sigma$-vertices consists of the $m!$ points 
\[
\overline{v_{\prec}} := \sum_{k=1}^m \bigg( \dfrac{l_k +1}{2}
+ \sum_{j \, : \, \sigma_j \prec \sigma_k} l_j  \bigg) e_{\sigma_k}
\]
as $\prec$ ranges over the $m!$ possible linear orderings of $\sigma_1, \sigma_2, \dots, \sigma_m$.
\end{definition}

These points can be nicely described in terms of the map taking a point to the average of its $\sigma$-orbit.

\begin{defprop}\label{def:average}
The \emph{$\sigma$-average map} is the projection $\overline{\,\cdot\,} :  \R^n \rightarrow \R^n$ taking a point $w$ to the average $\overline{w}$ of its $\sigma$-orbit. If $|\sigma|$ is the order of $\sigma$ as an element of the symmetric group $S_n$, we have:
\begin{align}\label{eq:orbit}
\w \,\,:=& \,\,\frac1{|\sigma|} \sum_{i=1}^{|\sigma|} \sigma^i \cdot w \nonumber \\
=&\,\, \sum_{k=1}^m\dfrac{\sum_{j \in \sigma_k} w_j}{l_k} \, e_{\sigma_k}.
\end{align}
\end{defprop}

\begin{proof}
To prove the equality of these two expressions, let $w_{\sigma_k}$ denote the projection of $w$ to the coordinates in $\sigma_k$, so the $i$th coordinate of $w_{\sigma_k}$ equals $w_i$ if $i \in \sigma_k$ and $0$ otherwise. Thus, $w = w_{\sigma_1} + \cdots + w_{\sigma_m}$ and 
\[
    \overline{w} = 
\sum_{k=1}^m \overline{w_{\sigma_k}} 
= \sum_{k=1}^m \dfrac{1}{|\sigma|} \sum_{i=1}^{|\sigma|} \sigma^i w_{\sigma_k}\\
    =\dfrac{1}{|\sigma|} \sum_{k=1}^m \dfrac{|\sigma|}{|\sigma_k|} \sum_{i=1}^{|\sigma_k|} \sigma_k^i w_{\sigma_k},
\]
because $\sigma_k$ is the only cycle that acts on $w_{\sigma_k}$ non-trivially, and $|\sigma|$ is a multiple of $|\sigma_k|$. For each cycle $\sigma_k$ we have 
\[
\sum_{i=1}^{|\sigma_k|}\sigma_k^i w_{\sigma_k} = \Big(\sum_{j \in \sigma_k} w_j\Big) e_{\sigma_k},
\]
from which the desired equiality follows.
\end{proof}

\begin{proposition}\label{prop:sigmaperm}
Given $\sigma \in S_n$, we say a permutation $v = (v_1, \dots, v_n)$ of $[n]$ is  \emph{$\sigma$-standard} if it satisfies the following property: for each cycle $(j_1\ j_2\ \cdots \ j_r)$ of $\sigma$, $(v_{j_1}, v_{j_2}, \dots, v_{j_r})$ is a sequence of consecutive integers in increasing order. The set of \emph{$\sigma$-vertices} equals
\[
\Vert(\sigma) := \{\w \, : \, w \textrm{ is a $\sigma$-standard permutation of } [n]\}
\]
with no repretitions.
\end{proposition}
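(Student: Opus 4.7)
The plan is to establish a bijection between $\sigma$-standard permutations of $[n]$ and linear orderings of the cycles $\sigma_1, \ldots, \sigma_m$, then compute the $\sigma$-average of each $\sigma$-standard permutation directly and identify it with the corresponding $\overline{v_\prec}$ from Definition \ref{def:v}.

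First, I would observe that in any $\sigma$-standard permutation $w$, the entries indexed by a cycle $\sigma_k$ form a block of consecutive integers $\{c_k+1, c_k+2, \ldots, c_k+l_k\}$ for some $c_k \geq 0$, since those entries are required to be consecutive along the cycle. These $m$ blocks partition $[n]$, so they are ordered on the number line: ordering the cycles by $c_k$ defines a linear ordering $\prec_w$ of $\sigma_1, \ldots, \sigma_m$, and one checks that $c_k = \sum_{j : \sigma_j \prec_w \sigma_k} l_j$. Conversely, any linear ordering $\prec$ of the cycles determines a unique $\sigma$-standard permutation $w_\prec$ by assigning to cycle $\sigma_k$ the consecutive block of size $l_k$ starting at $c_k+1 = 1 + \sum_{j : \sigma_j \prec \sigma_k} l_j$, and then filling in those values in increasing order along $\sigma_k$. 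This is a bijection between $\sigma$-standard permutations and linear orderings of the cycles.

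Second, I would apply formula \eqref{eq:orbit} to compute $\overline{w_\prec}$. The sum of entries of $w_\prec$ indexed by $\sigma_k$ equals the sum of the consecutive block $\{c_k+1, \ldots, c_k+l_k\}$, namely $l_k c_k + \frac{l_k(l_k+1)}{2}$. Dividing by $l_k$, the coefficient of $e_{\sigma_k}$ in $\overline{w_\prec}$ is $c_k + \frac{l_k+1}{2}$. Substituting the expression for $c_k$ from the previous paragraph shows that $\overline{w_\prec} = \overline{v_\prec}$, so $\{\overline{w} : w \text{ is $\sigma$-standard}\} = \Vert(\sigma)$.

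Finally, I would verify the no-repetitions claim by showing that distinct orderings $\prec$ yield distinct points $\overline{v_\prec}$. This holds because the tuple $(c_k)_k$ reconstructs $\prec$: the cycle with $c_k = 0$ comes first, then the cycle whose $c_k$ equals the length of the first cycle comes second, and so on. Hence the map $\prec \mapsto \overline{v_\prec}$ is injective, and $|\Vert(\sigma)| = m!$. The only step requiring genuine care is the bijection in the first paragraph — specifically, checking that the $\sigma$-standardness condition on $w$ forces the values at each cycle to form a consecutive block and that these blocks must tile $[n]$ via a linear order on the cycles. The subsequent computation and the injectivity check are essentially routine.
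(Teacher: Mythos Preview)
Your proposal is correct and follows essentially the same route as the paper: establish the bijection between $\sigma$-standard permutations and linear orderings of the cycles, then use formula~\eqref{eq:orbit} to compute the coefficient of $e_{\sigma_k}$ in the $\sigma$-average and match it with Definition~\ref{def:v}. Your treatment is in fact slightly more explicit than the paper's in two places --- you carry out the arithmetic of the block sum before dividing, and you separately verify the ``no repetitions'' claim via the map $\prec \mapsto (c_k)_k$ --- but neither constitutes a genuinely different approach.
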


\begin{proof}
If $v= (v_1, \dots, v_n)$ is a $\sigma$-standard permutation, then for each cycle $(a_1\ a_2\ \dots\  a_r)$ of $\sigma$, $v_{a_1},\dots, v_{a_r}$ is an increasing sequence of consecutive integers. The placement of these integers determines a linear ordering $\prec$ of $\sigma_1, \dots, \sigma_m$ as follows.
The smallest cycle in $\prec$ is the cycle $\sigma_a$ whose coordinates are the integers $1, 2, \dots, l_a$, the second smallest is the cycle $\sigma_b$ whose coordinates are the integers $l_a +1, l_a + 2, \dots, l_a + l_b$, and so on.
Any linear order $\prec$ of the cycles corresponds to a unique $\sigma$-standard permutation $v_{\prec}$ in this way.

Now, we can use \eqref{eq:orbit} to compute $\overline{v_{\prec}}$:
for each cycle $\sigma_k$,  the set $\{v_i \, : \, i \in \sigma_k\}$ consists of the integers from 
$ 1+ \hspace{-.2cm}\sum\limits_{j \, : \, \sigma_j \prec \sigma_k} l_j $ to  $l_k+\hspace{-.2cm} \sum\limits_{j \, : \, \sigma_j \prec \sigma_k} l_j $, whose average is $ \frac12(l_k+1) + \hspace{-.2cm} \sum\limits_{j \, : \, \sigma_j \prec \sigma_k} l_j$, in agreement with Definition \ref{def:v}. 
\end{proof}

\begin{example}
For $\sigma=(1234)(567)(89)$, the $\sigma$-standard permutations in $S_9$ are 
\begin{align*}
(1,2,3,4,\,\, 5,6,7, \,\, 8,9), \qquad &  (1,2,3,4, \,\, 7,8,9, \,\, 5,6), \\
(4,5,6,7, \,\, 1,2,3, \,\, 8,9),\qquad  &  (3,4,5,6, \,\, 7,8,9, \,\, 1,2),  \\
(6,7,8,9, \,\, 1,2,3, \,\, 4,5), \qquad  & (6,7,8,9, \,\, 3,4,5, \,\, 1,2),
\end{align*}
and the corresponding $\sigma$-vertices are
\begin{align*}
&\tfrac{1+2+3+4}{4}\, e_{1234} +\tfrac{5+6+7}{3}\,e_{567} +\tfrac{8+9}{2}\,e_{89}, \quad
\tfrac{1+2+3+4}{4}\,e_{1234} +\tfrac{7+8+9}{3}\,e_{567} +\tfrac{5+6}{2}\,e_{89},\\
&\tfrac{4+5+6+7}{4}\,e_{1234} +\tfrac{1+2+3}{3}\,e_{567} +\tfrac{8+9}{2}\,e_{89}, \quad
\tfrac{3+4+5+6}{4}\,e_{1234} +\tfrac{7+8+9}{3}\,e_{567} +\tfrac{1+2}{2}\,e_{89}, \\
&\tfrac{6+7+8+9}{4}\,e_{1234} +\tfrac{1+2+3}{3}\,e_{567} +\tfrac{4+5}{2}\,e_{89}, \quad
\tfrac{6+7+8+9}{4}\,e_{1234} +\tfrac{3+4+5}{3}\,e_{567} +\tfrac{1+2}{2}\,e_{89}.
\end{align*}
\end{example}

The following standard observation will be very important.

\begin{lemma} \label{lem:averaging}
The image of the permutahedron $\Pi_n$ under the $\sigma$-averaging map $\overline{\,\cdot\,}$ is the fixed polytope $\Pi_n^\sigma$.
\end{lemma}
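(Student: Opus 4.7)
My plan is to prove the two inclusions $\overline{\Pi_n} \subseteq \Pi_n^\sigma$ and $\Pi_n^\sigma \subseteq \overline{\Pi_n}$ separately; both should follow quickly from the definition of the averaging map together with the $S_n$-invariance and convexity of $\Pi_n$.

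For the forward inclusion, I would take any $w \in \Pi_n$ and check that $\overline{w} \in \Pi_n^\sigma$. First, each term $\sigma^i \cdot w$ in the sum defining $\overline{w}$ lies in $\Pi_n$, because $\Pi_n$ is invariant under the $S_n$-action that permutes coordinates (its vertex set is exactly the $S_n$-orbit of $(1,2,\ldots,n)$). Since $\Pi_n$ is convex, the average $\overline{w} = \frac{1}{|\sigma|}\sum_{i=1}^{|\sigma|} \sigma^i \cdot w$ also lies in $\Pi_n$. Second, $\overline{w}$ is fixed by $\sigma$: applying $\sigma$ to the sum merely cyclically shifts the index $i$, so $\sigma \cdot \overline{w} = \frac{1}{|\sigma|}\sum_{i=1}^{|\sigma|} \sigma^{i+1} \cdot w = \overline{w}$. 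Thus $\overline{w} \in \Pi_n^\sigma$.

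For the reverse inclusion, the key observation is that the averaging map acts as the identity on $\Pi_n^\sigma$. Indeed, if $y \in \Pi_n^\sigma$ then $\sigma^i \cdot y = y$ for every $i$, so $\overline{y} = \frac{1}{|\sigma|}\sum_{i=1}^{|\sigma|} y = y$. In particular every $y \in \Pi_n^\sigma$ equals $\overline{y}$ for some $y \in \Pi_n$ (namely itself), showing $\Pi_n^\sigma \subseteq \overline{\Pi_n}$.

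There is essentially no obstacle here: the lemma is a formal consequence of the two facts that $\overline{\,\cdot\,}$ is a convex combination of isometries of $\Pi_n$ preserving the $\sigma$-fixed subspace, and that it restricts to the identity on that subspace. The only thing to be mildly careful about is verifying that $\Pi_n$ is $S_n$-invariant and that shifting the summation index in the definition of $\overline{w}$ is legitimate (using $\sigma^{|\sigma|+1} = \sigma$), both of which are immediate.
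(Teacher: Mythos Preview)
Your proof is correct and follows exactly the same approach as the paper's: show $\overline{w}\in\Pi_n^\sigma$ using $S_n$-invariance and convexity of $\Pi_n$, and conversely observe that any $p\in\Pi_n^\sigma$ satisfies $p=\overline{p}$. The paper simply compresses this into two sentences, whereas you spell out the invariance and index-shift verifications explicitly.
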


\begin{proof} 
For any point $w \in \Pi_n$, the average $\overline{w}$ of its $\sigma$-orbit is in $\Pi_n$ and is $\sigma$-fixed, so it is in $\Pi_n^\sigma$. Conversely, any point $p \in \Pi_n^\sigma$ satisfies $p = \overline{p}$, and hence is in the image of $\overline{\,\cdot\,}$.
\end{proof}

\subsection{Towards a zonotope description}\label{sec:zonotope}

We will show in Theorem \ref{thm:big} that the fixed polytope $\Pi_n^\sigma$ is the following zonotope.

\begin{definition}\label{def: M sigma}
Let $M_{\sigma}$ denote the Minkowski sum
\begin{align} \label{Msigma}
M_{\sigma}\,\, := \,\, & \sum_{1 \leq j < k \leq m}[l_je_{\sigma_k}, l_ke_{\sigma_j}]+ \sum_{k=1}^m \dfrac{l_k+1}{2}e_{\sigma_k} \nonumber \\
= \,\,  & \sum_{1 \leq j < k \leq m}[0, l_ke_{\sigma_j} - l_je_{\sigma_k}]+ \sum_{k=1}^m \bigg(\dfrac{l_k+1}{2}  + \sum 
_{j<k} l_j \bigg)e_{\sigma_k}.
\end{align}
\end{definition}

Two polytopes $P$ and $Q$ are \emph{combinatorially equivalent} if their  posets of faces, partially ordered by inclusion, are isomorphic. 
They are \emph{linearly equivalent} if there is a bijective linear function mapping $P$ to $Q$. They are \emph{normally equivalent} if they live in the same ambient vector space and have the same normal fan. 

\begin{proposition}\label{prop: comb eq}
The zonotope $M_\sigma$ is combinatorially equivalent to the standard permutahedron $\Pi_m$, where $m$ is the number of cycles of $\sigma$.
\end{proposition}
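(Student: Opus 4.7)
The plan is to identify $M_\sigma$ and $\Pi_m$ as zonotopes and exhibit a linear bijection between them that matches their generating segments up to positive rescaling; since rescaling the generators of a zonotope by positive constants preserves its normal fan, this will yield the desired combinatorial equivalence.

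Using the second line of Definition \ref{def: M sigma}, $M_\sigma$ is a translate of the zonotope $Z_\sigma := \sum_{1 \leq j < k \leq m}[0,\, l_k e_{\sigma_j}-l_j e_{\sigma_k}]$, which lies in the $m$-dimensional subspace $V := \Span(e_{\sigma_1},\ldots,e_{\sigma_m}) \subset \R^n$. Similarly, Proposition \ref{thm:perm}(3) exhibits $\Pi_m \subset \R^m$ as a translate of the zonotope $Z_m := \sum_{1 \leq j < k \leq m}[0,\, e_j-e_k]$.

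The key construction is the linear bijection $L: \R^m \to V$ sending $e_k \mapsto e_{\sigma_k}/l_k$. A direct computation gives
\[
L(e_j - e_k) \,=\, \frac{e_{\sigma_j}}{l_j} - \frac{e_{\sigma_k}}{l_k} \,=\, \frac{1}{l_j l_k}\,\big(l_k e_{\sigma_j} - l_j e_{\sigma_k}\big),
\]
so $L$ carries each generator $e_j - e_k$ of $Z_m$ to a positive scalar multiple of the corresponding generator $l_k e_{\sigma_j} - l_j e_{\sigma_k}$ of $Z_\sigma$.

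The main point is then that $L(Z_m)$ and $Z_\sigma$ are combinatorially (in fact normally) equivalent. This follows from the standard observation that the normal fan of a zonotope $\sum_i [0, v_i]$ coincides with the fan of the hyperplane arrangement $\{v_i^\perp\}$, which depends only on the lines spanned by the $v_i$. Stringing together the translations $\Pi_m \to Z_m$ and $Z_\sigma \to M_\sigma$, the linear isomorphism $L: Z_m \to L(Z_m)$, and the combinatorial equivalence $L(Z_m) \cong Z_\sigma$, we conclude $\Pi_m \cong M_\sigma$, as claimed.
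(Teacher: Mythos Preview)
Your proof is correct and follows essentially the same approach as the paper: both define the same linear bijection $e_k \mapsto e_{\sigma_k}/l_k$ from $\R^m$ to the $\sigma$-fixed subspace, observe that it carries the generators of $\Pi_m$ to positive scalar multiples of the generators of $M_\sigma$, and conclude via normal equivalence. The only cosmetic difference is the justification for why rescaling generators preserves the normal fan---you invoke the description of the normal fan of a zonotope as the fan of the hyperplane arrangement $\{v_i^\perp\}$, while the paper uses the fact that the normal fan of a Minkowski sum is the common refinement of the summands' normal fans.
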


 \begin{proof}
The $\sigma$-fixed subspace of $\R^n$ is $ (\mathbb{R}^n)^{\sigma} := \{x \in \R^n \, : \, x_j = x_k \textrm{ if $j$ and $k$ are in the same cycle of $\sigma$}\}$, so 
 $\{e_{\sigma_1}, \ldots, e_{\sigma_m}\}$ is a basis for it.  
 Let $\{f_1, \ldots, f_m\}$ be the standard basis for $\R^m$ and define the linear bijective map $\phi : \mathbb{R}^m \rightarrow (\mathbb{R}^n)^{\sigma}$ by 
\begin{equation} \label{eq:phi}
\phi(f_i) = \frac1{l_i} e_{\sigma_i} \textrm{ for } 1 \leq i \leq m.
\end{equation}
This map shows that $\sum_{j<k} [f_k, f_j]$ is linearly equivalent to $\sum_{j<k} \left[e_{\sigma_k} /l_k\, ,\,  e_{\sigma_j}/l_j\right].$

The normal fan of a Minkowski sum $P_1 + \cdots P_s$ is the coarsest common refinement of the normal fans of $P_1$, \ldots, $P_s$  \cite[Prop. 7.12]{Ziegler}. Therefore, scaling each summand does not change the normal fan of a Minkowski sum. Thus
$\sum_{j<k} \left[e_{\sigma_k} /l_k\, ,\,  e_{\sigma_j}/l_j\right]$
is normally equivalent to 
$\sum_{j<k} l_jl_k \left[e_{\sigma_k} /l_k\, ,\,  e_{\sigma_j}/l_j\right] = $
$\sum_{j<k} \left[l_je_{\sigma_k} \, ,\,  l_ke_{\sigma_j} \right].$

%
Finally, since $\sum_{j<k} [f_k, f_j]$ and $\sum_{j<k} \left[l_j e_{\sigma_k}, l_k e_{\sigma_j}\right]$ are translates of $\Pi_m$ and $M_\sigma$, respectively, the desired result follows from the fact that linear and normal equivalence implies combinatorial equivalence.
 \end{proof}

\subsection{The three descriptions of the fixed polytope are correct}

Now we are ready to prove that the vertex and Minkowski sum descriptions of the fixed polytope presented in Sections \ref{sec:vertices} and \ref{sec:zonotope} are correct.

\begin{theorem} \label{thm:big}
Let $\sigma$ be a permutation of $[n]$ whose cycles $\sigma_1, \ldots, \sigma_m$ have respective lengths $l_1, \ldots, l_m$. The fixed polytope $\Pi_n^\sigma$ can be described in the following four ways:

\begin{enumerate}
\item[0.]
It is the set of points $x$ in the permutahedron $\Pi_n$ such that $\sigma \cdot x = x$.
\item
It is the set of points $x \in \R^n$ satisfying
\begin{enumerate}
\item $x_1 + x_2 + \cdots + x_n = 1 + 2 + \cdots +  n$,
\item $x_{i_1} + x_{i_2} + \cdots + x_{i_k} \geq 1 + 2 + \cdots + k$ for any subset $\{i_1, i_2, \dots, i_k\} \subset \{1,2, \dots, n\}$, \textrm{ and} 
\item $x_i=x_j$ for any $i$ and $j$ which are in the same cycle of $\sigma$.
\end{enumerate}
\item
It is the convex hull of the set $\Vert(\sigma)$ of $\sigma$-vertices described in Definition \ref{def:v}.
\item
It is the Minkowski sum $M_{\sigma} = \displaystyle \sum_{1 \leq j < k \leq m}[l_je_{\sigma_k}, l_ke_{\sigma_j}]+ \sum_{k=1}^m \dfrac{l_k+1}{2}e_{\sigma_k}$
of Definition \ref{def: M sigma}.
\end{enumerate}
Consequently, the fixed polytope $\Pi_n^\sigma$ is a zonotope that is combinatorially isomorphic to the permutahedron $\Pi_m$. It is $(m-1)$-dimensional and every $\sigma$-vertex is indeed a vertex of $\Pi_n^\sigma$.
\end{theorem}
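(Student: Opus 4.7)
The plan is to prove the chain of equalities $(0) = (1) = (3) = (2)$, after which the dimension claim, the combinatorial equivalence with $\Pi_m$, and the vertex claim all follow as quick corollaries.

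\medskip

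\noindent\textbf{Equivalence of $(0)$ and $(1)$.} This is immediate. The inequalities (a) and (b) cut out $\Pi_n$ by Proposition \ref{thm:perm}, while by Lemma \ref{lemma:eqs} the condition $\sigma \cdot x = x$ is equivalent to the equations (c). So a point satisfies $(1)$ if and only if it lies in $\Pi_n$ and is fixed by $\sigma$.

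\medskip

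\noindent\textbf{Equivalence of $(0)$ and $(3)$.} I plan to push the Minkowski sum decomposition of $\Pi_n$ from Proposition \ref{thm:perm} through the $\sigma$-averaging map. By Lemma \ref{lem:averaging}, $\Pi_n^\sigma$ equals the image $\overline{\Pi_n}$. Since averaging is linear,
\[
\Pi_n^\sigma \;=\; \sum_{1 \leq j < k \leq n} [\,\overline{e_k}, \overline{e_j}\,] \;+\; \sum_{k=1}^n \overline{e_k},
\]
and by \eqref{eq:orbit} one has $\overline{e_i} = \tfrac{1}{l_a} e_{\sigma_a}$ whenever $i \in \sigma_a$. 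I would split the pairs $\{j,k\}$ according to whether $j,k$ lie in the same cycle or different cycles. For each unordered pair of distinct cycles $\{\sigma_a, \sigma_b\}$, exactly $l_a l_b$ of the indexing pairs produce the segment between $\tfrac{1}{l_a}e_{\sigma_a}$ and $\tfrac{1}{l_b}e_{\sigma_b}$; their Minkowski sum is the dilate $[\,l_b e_{\sigma_a}, l_a e_{\sigma_b}\,]$, which reproduces the segment terms of $M_\sigma$. Pairs $j < k$ lying inside the same cycle $\sigma_a$ give $\binom{l_a}{2}$ degenerate segments collapsing to the translation $\tfrac{l_a - 1}{2} e_{\sigma_a}$. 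Adding the contribution $\sum_k \overline{e_k} = \sum_a e_{\sigma_a}$ yields the total translation $\sum_a \tfrac{l_a+1}{2} e_{\sigma_a}$, matching $M_\sigma$ exactly.

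\medskip

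\noindent\textbf{Equivalence of $(3)$ and $(2)$, and final claims.} By Proposition \ref{prop: comb eq}, the zonotope $M_\sigma$ is combinatorially equivalent to $\Pi_m$, so it has precisely $m!$ vertices, indexed by linear orderings $\prec$ of the cycles $\sigma_1, \dots, \sigma_m$. For a zonotope, the vertex corresponding to an ordering is obtained by picking one endpoint of each generating segment in a coherent way. For the segment $[\,l_j e_{\sigma_k}, l_k e_{\sigma_j}\,]$ with $j<k$, the choice dictated by $\prec$ is $l_j e_{\sigma_k}$ if $\sigma_j \prec \sigma_k$ and $l_k e_{\sigma_j}$ if $\sigma_k \prec \sigma_j$, i.e. one always places the weight on the $\prec$-larger cycle. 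Collecting the resulting coefficient of each $e_{\sigma_k}$ gives $\tfrac{l_k+1}{2} + \sum_{j \,:\, \sigma_j \prec \sigma_k} l_j$, which is exactly $\overline{v_\prec}$ from Definition \ref{def:v}. Hence the vertices of $M_\sigma$ are precisely the $\sigma$-vertices, proving $(3) = (2)$ and that every $\sigma$-vertex is in fact a vertex. The dimension $m-1$ was already established in the corollary to Lemma \ref{lemma:eqs}, and combinatorial equivalence with $\Pi_m$ is Proposition \ref{prop: comb eq}.

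\medskip

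\noindent\textbf{Main obstacle.} The delicate step is the averaging computation in the second paragraph: keeping track of the multiplicities $l_a l_b$ and $\binom{l_a}{2}$ without dropping factors, and in particular checking that the same-cycle pairs together with $\sum \overline{e_k}$ assemble into precisely the translation $\sum \tfrac{l_k+1}{2} e_{\sigma_k}$ that appears in $M_\sigma$, is where errors are easiest to make. Once this bookkeeping is done correctly, the remaining steps are essentially organizational.
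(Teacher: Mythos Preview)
Your argument is correct, and your route to $(0)=(3)$ is genuinely different from---and shorter than---the paper's. The paper establishes $\Pi_n^\sigma = \conv(\Vert(\sigma)) = M_\sigma$ via a cycle of three containments: $\conv(\Vert(\sigma)) \subseteq \Pi_n^\sigma$ (from Lemma~\ref{lem:averaging}), $M_\sigma \subseteq \conv(\Vert(\sigma))$ (by optimizing a generic linear functional over $M_\sigma$, essentially your argument for $(3)=(2)$), and finally $\Pi_n^\sigma \subseteq M_\sigma$. This last containment is the most laborious step: the paper writes each permutation $\tau$ as the endpoint of a minimal chain $\id=\tau_0,\tau_1,\dots,\tau_l=\tau$ of adjacent transpositions, tracks how each swap moves $\overline{\tau_i}$, and obtains the explicit formula $\overline{\tau}-\overline{\id}=\sum_{j<k}\frac{\inv_{j,k}(\tau)}{l_jl_k}(l_ke_{\sigma_j}-l_je_{\sigma_k})$, from which membership in $M_\sigma$ follows because $0\le\inv_{j,k}(\tau)\le l_jl_k$. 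You bypass this entirely by observing that the averaging map is linear, hence sends the Minkowski decomposition of $\Pi_n$ in Proposition~\ref{thm:perm} directly to a Minkowski decomposition of $\Pi_n^\sigma$; the bookkeeping with the multiplicities $l_al_b$ and $\binom{l_a}{2}$ that you flag as the ``main obstacle'' is routine and correct. What your approach gains is economy; what the paper's approach gains is the inversion formula for $\overline{\tau}$ and the accompanying geometric picture of a lattice path inside the zonotope, which are of independent interest even if not needed for the theorem itself.
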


\begin{proof}
Description 0. is the definition of the fixed polytope $\Pi_n^\sigma$, and we already proved in Lemma \ref{lemma:eqs} that description 1. is correct. It remains to prove that
\[
\Pi_n^{\sigma} = \conv(\Vert(\sigma)) = M_{\sigma} .
\]
We proceed in three steps as follows:
\[
A. \quad \conv(\Vert(\sigma)) \subseteq \Pi_n^{\sigma} \qquad \qquad 
B. \quad M_{\sigma} \subseteq \conv(\Vert(\sigma)) \qquad \qquad 
C. \quad \Pi_n^{\sigma} \subseteq M_{\sigma}
\]

\noindent $A. \qquad   \conv(\Vert(\sigma)) \subseteq \Pi_n^{\sigma}: $
This follows directly from Proposition \ref{prop:sigmaperm} and Lemma \ref{lem:averaging}.
%

\medskip

\noindent $B. \qquad   M_{\sigma} \subseteq \conv(\Vert(\sigma)): $ 
It suffices to show that any vertex $v$ of $M_{\sigma}$ is in $\Vert(\sigma)$.
Consider a linear functional $c= (c_1, c_2, \dots, c_n) \in (\mathbb{R}^n)^*$ such that $v = (M_{\sigma})_c$ is the face of $M_\sigma$ where $c\in (\R^n)^*$ is maximized. For $k = 1,\ldots, m$, let
\[
c_{\sigma_k} := \frac1{l_k} \sum_{i \in \sigma_k} c_i.
\]

First we claim that $c_{\sigma_j} \neq c_{\sigma_k}$ for $j \neq k$.
Minkowski sums satisfy $(P+Q)_c = P_c + Q_c$ \cite[Eq. 2.4]{Sturmfelspolytopes}, so
\begin{equation}\label{eq:maxface}
v = (M_{\sigma})_c = \sum_{j < k}[l_je_{\sigma_k}, l_ke_{\sigma_j}]_c + \sum_{k=1}^{m} \frac{l_k +1}{2}e_{\sigma_k}.
\end{equation}
Thus each summand $[l_je_{\sigma_k}, l_ke_{\sigma_j}]_c$ must be a single point, equal to either $l_je_{\sigma_k}$ or $l_ke_{\sigma_j}$. Therefore,
\[
c(l_j e_{\sigma_k}) = l_j \sum_{i \in \sigma_k} c_i = l_jl_k c_{\sigma_k} 
\qquad \textrm{ and } \qquad 
c(l_k e_{\sigma_j}) = l_k \sum_{i \in \sigma_j} c_i = l_jl_k c_{\sigma_j} 
\]
are distinct, hence $c_{\sigma_j} \neq c_{\sigma_k}$, as desired. We also see that
$
[l_je_{\sigma_k}, l_ke_{\sigma_j}]_c$
equals
$
l_je_{\sigma_k}
$ if 
$c_{\sigma_j} < c_{\sigma_k},$ and it equals 
$l_ke_{\sigma_j}$ if
$c_{\sigma_j} > c_{\sigma_k}$.


%
Now that we know that $c_{\sigma_1}, c_{\sigma_2}, \dots, c_{\sigma_m}$ are strictly ordered, we let $\prec$ be the corresponding linear order on $\sigma_1, \sigma_2, \dots, \sigma_m$. We then have that
\[
v =  \sum_{k=1}^m \Big(  \sum_{j \, : \, c_{\sigma_j}< c_{\sigma_k}} l_j \Big) e_{\sigma_k} + \sum_{k=1}^m \frac{l_k+1}{2} e_{\sigma_k}
    = \overline{v_{\prec}} \in \Vert(\sigma),
\]  
as desired.

%

\bigskip

\noindent $C. \qquad  \Pi_n^{\sigma} \subseteq M_{\sigma}:$
By Lemma \ref{lem:averaging},
it suffices to show that $\overline{\tau} \in M_{\sigma}$ for all permutations $\tau$. To do so, let us first derive an alternative expression for $\overline{\tau}$. We begin with the vertex $\id = (1,2, \dots, n)$ of $\Pi_n$. The identity permutation is $\sigma$-standard, so
\begin{equation}\label{eq:id}
\overline{\id} = \sum_{k=1}^m \bigg( \dfrac{l_k +1}{2}\ + \sum_{j<k} l_j \bigg) e_{\sigma_k}.
\end{equation}
Notice that this is the translation vector for the Minkowski sum of  \eqref{Msigma}.

Now, let us compute $\overline{\tau}$ for any permutation $\tau$. Let 
\[
l = \inv(\tau) =  |\{(a,b) \, : \, 1 \leq a < b \leq n, \,\, \tau(a) > \tau(b)\}|
\]
be the number of inversions of $\tau$. Consider a minimal sequence $\id = \tau_0, \tau_1, \ldots, \tau_l = \tau$ of permutations such that $\tau_{i+1}$ is obtained from $\tau_i$ by exchanging the positions of numbers $p$ and $p+1$, thus introducing a single new inversion without affecting any existing inversions. Such a sequence corresponds to a minimal factorization of $\tau$ as a product of simple transpositions $(p \,\,\, p+1)$ for $1 \leq p \leq n-1$. We have $\inv(\tau_i) = i$ for $1 \leq i \leq l$. 

Now we compute $\overline{\tau}$ by analyzing how $\overline{\tau_i}$ changes as we introduce new inversions, using that
\begin{equation}\label{eq:contributions}
\overline{\tau} - \overline{\id} = 
(\overline{\tau_{l}} - \overline{\tau_{l-1}}) + \cdots + 
(\overline{\tau_{1}} - \overline{\tau_{0}}).
\end{equation}
If $a<b$ are the positions of the numbers $p$ and $p+1$ that we switch as we go from $\tau_i$ to $\tau_{i+1}$, 
then
regarding $\tau_i$ and $\tau_{i+1}$ as vectors in $\R^n$ we have
\[
\tau_{i+1} - \tau_i = e_a-e_b.
\]
If $\sigma_j$ and $\sigma_k$ are the cycles of $\sigma$ containing $a$ and $b$, respectively, we have 
\begin{equation} \label{eq:costofaswap}
\overline{\tau_{i+1}} - \overline{\tau_i} =  \overline{e_a} -\overline{e_b} 
=  \frac{e_{\sigma_j}}{l_j} - \frac{e_{\sigma_k}}{l_k} = \frac1{l_jl_k} (l_k e_{\sigma_j} - l_j e_{\sigma_k})
\end{equation}
in light of \eqref{eq:orbit}. 
This is the local contribution to \eqref{eq:contributions} that we obtain when we introduce a new inversion between a position $a$ in cycle $\sigma_j$ and a position $b$ in cycle $\sigma_k$ in our permutation. Notice that this contribution is $0$ when $j=k$. Also notice that we will still have an inversion between positions $a$ and $b$ in all subsequent permutations, due to the minimality of the sequence. We conclude that
\begin{equation}\label{eq:idtotau}
  \overline{\tau} - \overline{\id} =  \sum_{j<k} \dfrac{\inv_{j,k}(\tau)}{l_jl_k}(l_k e_{\sigma_j} - l_j e_{\sigma_k})
\end{equation}
where 
\[
\inv_{j,k}(\tau) =  |\{(a,b) \, : \, 1 \leq a < b \leq n,  \,\, a \in \sigma_j, \,\, b \in \sigma_k\,\, \textrm{ and } \tau(a) > \tau(b)\}|
\]
is the number of inversions in $\tau$ between a position in $\sigma_j$ and a position in $\sigma_k$ for $j<k$.

Equations \eqref{eq:id} and \eqref{eq:idtotau} give us an alternative description for $\overline{\tau}$. This description makes it apparent that $\overline{\tau} \in M_\sigma$: Notice that $|\sigma_j|=l_j$ and $|\sigma_k|=l_k$ imply that $0 \leq \inv_{j,k}(\tau) \leq l_jl_k$, so
\[
  \overline{\tau} - \overline{\id} \in \sum_{1 \leq j < k \leq n} [0, l_k e_{\sigma_j} - l_j e_{\sigma_k}];
\]  
combining this with \eqref{Msigma} and \eqref{eq:id} gives the desired result.
\end{proof}

 \begin{figure}[h]
    \centering   
     \includegraphics[height=3.9cm]{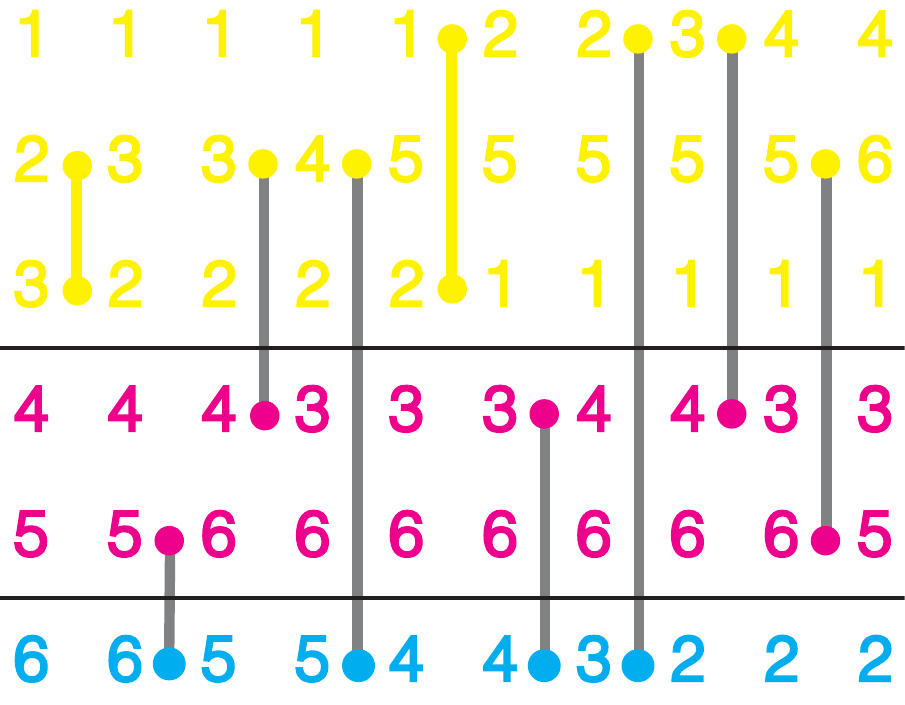} \qquad \qquad \qquad
    \includegraphics[height=3.4cm]{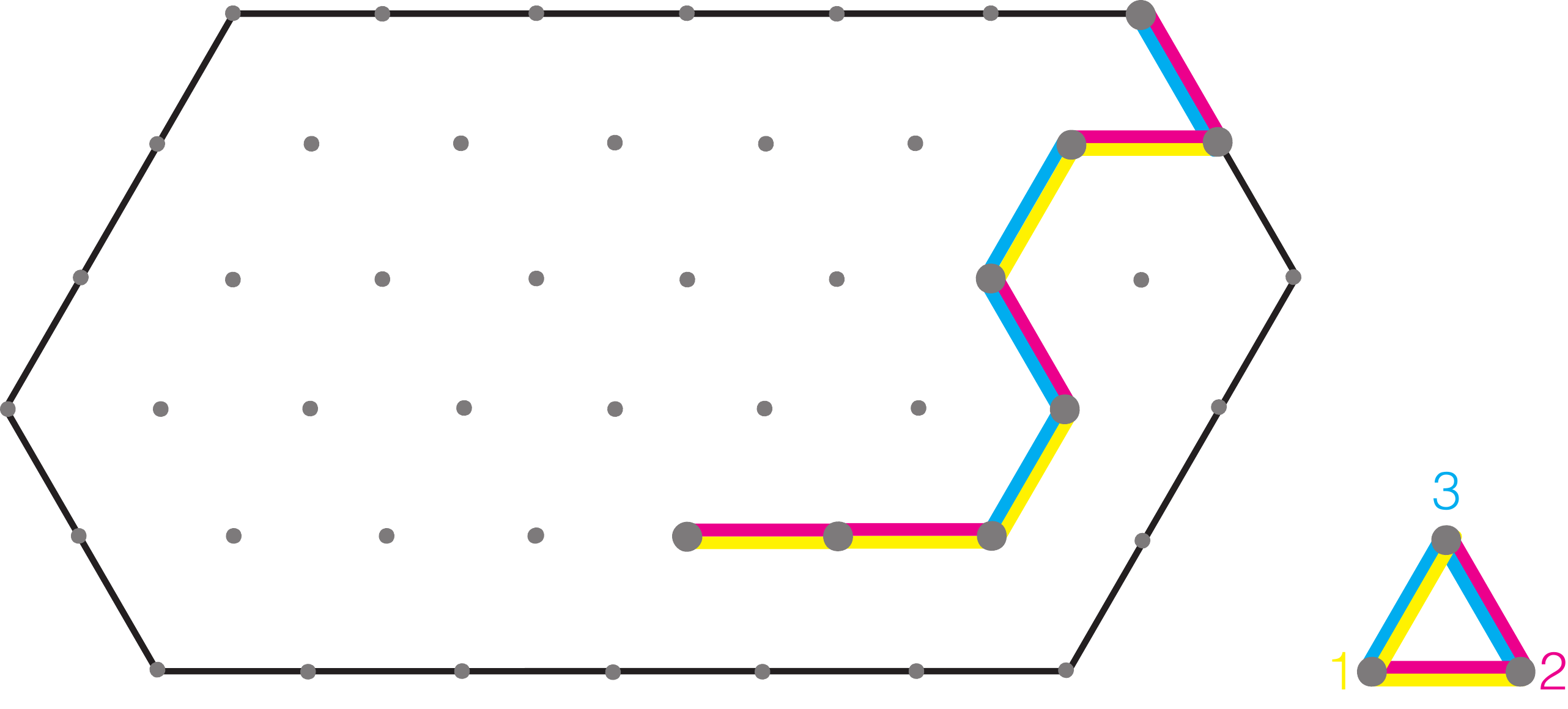}
    \caption{(a) A minimal sequence of permutations  $\id=\tau_0, \tau_1, \ldots, \tau_9 = 461352$ adding one inversion at a time and (b) the corresponding path from $\overline{\id}$ to $\overline{\tau}$ in the zonotope $M_\sigma$.}
    \label{fig:pathinzonotope}
\end{figure}

\begin{example}
Figure \ref{fig:pathinzonotope} illustrates part C of the proof above for $n=6$, $\sigma=(123)(45)(6)$, and the permutation $\tau = 461352$. This permutation has $\inv(\tau)=9$ inversions, and the columns of the left panel show a minimal sequence of permutations $\id = \tau_0, \tau_1, \ldots, \tau_9 = \tau$ where each $\tau_{i+1}$ is obtained from $\tau_i$ by swapping two consecutive numbers, thus introducing a single new inversion. 

The rows of the diagram are split into three groups 1, 2, and 3, corresponding to the support of the cycles of $\sigma$. Out of the $\inv(\tau)=9$ inversions of $\tau$, there are $\inv_{1,2}(\tau) = 3$ involving groups $1$ and $2$, $\inv_{1,3}(\tau) =2$ involve groups $1$ and $3$, and $\inv_{2,3}(\tau) =2$ involving groups $2$ and $3$.

This sequence of permutations gives rise to a walk from $\overline{\id}$, which is the top right vertex of the zonotope $M_\sigma$, to $\overline{\tau}$. In the rightmost triangle, which is not drawn to scale, vertex $i$ represents the point $e_{\sigma_i}/l_i$ for $1 \leq i \leq 3$. Whenever two numbers in groups $j<k$ are swapped in the left panel, to get from permutation $\tau_i$ to $\tau_{i+1}$, we take a step in direction $e_{\sigma_j}/l_j - e_{\sigma_k}/l_k$ in the right panel, to get from point $\overline{\tau_i}$ to $\overline{\tau_{i+1}}$. This is the direction of edge $jk$ in the triangle, and its length is $1/l_jl_k$ of the length of the generator $l_ke_{\sigma_j} - l_je_{\sigma_k}$ of the zonotope.
Then
\[
\overline{\tau} - \overline{\id} =
\frac{3}{l_1l_2} (l_2 e_{\sigma_1} - l_1 e_{\sigma_2}) +
\frac{2}{l_1l_3} (l_3 e_{\sigma_1} - l_1 e_{\sigma_3}) +
\frac{2}{l_2l_3} (l_3 e_{\sigma_2} - l_2 e_{\sigma_3}).
\]
Since $3 =  \inv_{1,2}(\tau) \leq l_1l_2 = 6$,\,  $2 = \inv_{1,3}(\tau) \leq l_1l_3 = 3$ and $2 =\inv_{2,3}(\tau) \leq l_2l_3 = 2$, the resulting point $\overline{\tau}$ is in the zonotope $M_\sigma$.
\end{example}

\section{The volumes of the fixed polytopes of $\Pi_n$} \label{sec:volume}

To compute the volume of the fixed polytope $\Pi_n^\sigma$ we will use its description as a zonotope, recalling that a zonotope can be tiled by parallelotopes as follows. If $A$ is a set of vectors, then $B \subseteq A$ is called a \emph{basis} for $A$ if $B$ is linearly independent and $\rank(B) = \rank(A)$. We define the parallelotope $\square B$ to be the Minkowski sum of the segments in $B$, that is, 
\[
\square B := \Big\{\sum_{b \in B} \lambda_b b \, : \, 0 \leq \lambda_b \leq 1 \textrm{ for each } b \in B\Big\}.
\]

\begin{theorem}\label{thm:volzonotope} \cite{D'AdderioMoci.Ehrhart, Stanleyzonotope, Ziegler}
Let $A \subset \Z^n$ be a set of lattice vectors of rank $d$ and $Z(A)$ be the associated zonotope; that is, the Minkowski sum of the vectors in $A$.
\begin{enumerate}
\item
The zonotope $Z(A)$ can be tiled using one translate of the parallelotope $\square B$ for each basis $B$ of $A$. Therefore, the  volume of the $d$-dimensional zonotope $Z(A)$ is
\[
\Vol\,(Z(A)) = \sum_{B \subseteq A \atop B \textrm{ basis}} \Vol\,(\square B).
\]
\item
For each $B \subset \Z^n$ of rank $d$, $\Vol(\square B)$ equals the index of $\Z B$ as a sublattice of $(\Span \, B) \cap \Z^n$. Using the vectors in $B$ as the columns of an $n \times d$ matrix, $\Vol(B)$ is the greatest common divisor of the minors of rank $d$.
\end{enumerate}
\end{theorem}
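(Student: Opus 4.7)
The plan is to prove the two parts separately. Part 1 (tiling and volume formula) would be handled by an induction on $|A|$ via a deletion-contraction argument; Part 2 (volume of a single parallelotope) follows from the Smith normal form.

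For Part 1, pick $v \in A$ and let $A' := A \setminus \{v\}$, so that $Z(A) = Z(A') + [0, v]$ as a Minkowski sum. I would split into two cases by whether $v \in \Span(A')$. If $v \notin \Span(A')$, the rank grows by one, every basis of $A$ has the form $B' \cup \{v\}$ for $B'$ a basis of $A'$, and the tiling of $Z(A)$ is the ``prism'' of the inductive tiling of $Z(A')$ with the segment $[0, v]$. If $v \in \Span(A')$, the rank is unchanged: the bases of $A$ are those of $A'$ together with new bases obtained by swapping $v$ in for an element of a circuit through $v$. Geometrically, certain tiles of $Z(A')$ get sliced by hyperplanes parallel to $v$, each piece is translated either by $0$ or by $v$, and the two halves line up to form tiles indexed respectively by an old basis and a new basis through $v$. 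The inductive hypothesis supplies translation vectors $t_B$ for the tiles of $Z(A')$ and $Z(A/v)$; the new tiles of $Z(A)$ receive translation vectors read off from the decomposition. The volume identity then follows by summing $\EVol$ over the tiles and dividing by the Euclidean volume of a primitive parallelotope.

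For Part 2, observe that $\square B$ is a fundamental domain for the lattice $\Z B$ inside the subspace $\Span B$, so $\EVol(\square B)$ equals the covolume of $\Z B$ there. A primitive parallelotope is, by definition, a fundamental domain for the full lattice $\Lambda := \Z^n \cap \Span B$. Taking the ratio gives $\Vol(\square B) = [\Lambda : \Z B]$. To identify this index with the gcd of the $d \times d$ minors of the $n \times d$ matrix $M$ whose columns are the vectors of $B$, write $M = U D V$ in Smith normal form, with $U \in GL_n(\Z)$, $V \in GL_d(\Z)$, and $D$ diagonal with invariant factors $d_1 \mid d_2 \mid \cdots \mid d_d$. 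Then $[\Lambda : \Z B] = d_1 d_2 \cdots d_d$, and this product equals the gcd of the $d \times d$ minors of $M$ (a classical consequence of the fact that these minors are invariant, up to sign, under left- and right-multiplication by unimodular matrices).

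The main obstacle is making the tiling in Part 1 fully rigorous. The numerical identity $\Vol Z(A) = \sum_B \Vol \square B$ can be derived more directly from Cauchy--Binet applied to $\det(M M^{T})$, where $M$ has columns $A$, which expresses the squared Euclidean volume as a sum of squared maximal minors. But the actual \emph{tiling} assertion --- that the translated parallelotopes cover $Z(A)$ with pairwise disjoint interiors --- requires careful bookkeeping of the translation vectors $t_B$. The standard way to manage this is to fix a generic linear ordering of $A$ and work with half-open parallelotopes indexed by the ``externally active'' complement of each basis, so that the half-open pieces partition $Z(A)$ exactly. This combinatorial setup is the delicate step; Part 2 is comparatively routine.
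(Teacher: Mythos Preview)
The paper does not prove this theorem at all: it is stated with citations to D'Adderio--Moci, Stanley, and Ziegler and then used as a black box in Section~\ref{sec:volume}. So there is no ``paper's own proof'' to compare against; your sketch is a reconstruction of the classical arguments behind those references.

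Your main lines are the standard ones and are fine as outlines. The deletion--contraction induction for the tiling is essentially Shephard's argument as presented in Ziegler, and you correctly flag that the delicate point is producing explicit translation vectors (the half-open decomposition via external activity is indeed how Stanley makes this precise). The Smith-normal-form computation for Part~2 is routine and correct.

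One genuine error: your aside that the volume identity ``can be derived more directly from Cauchy--Binet applied to $\det(MM^T)$'' is wrong. If $M$ is the matrix with columns $A$ and $|A|>d$, then Cauchy--Binet gives $\det(MM^T)=\sum_B (\det M_B)^2$, a sum of \emph{squares} of minors; this is not $(\Vol Z(A))^2$, and in fact $\det(MM^T)=0$ whenever $\rank A < n$. The zonotope volume is $\sum_B |\det M_B|$ (or, in the non-full-dimensional normalized setting, $\sum_B \Vol(\square B)$), and that additive formula genuinely requires the tiling (or an equivalent geometric decomposition); there is no linear-algebra shortcut via Cauchy--Binet. Drop that remark and the rest of your plan stands.
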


By Theorem \ref{thm:big}, the fixed polytope $\Pi_n^\sigma$ is a translate of the zonotope generated by the set
\[
F_\sigma =  \left\{l_k e_{\sigma_j} - l_j e_{\sigma_k} \, ; \, 1 \leq j<k \leq m\right\}.
\]
This set of vectors has a nice combinatorial structure, which will allow us to describe the bases $B$ and the volumes $\Vol\,(\square B)$ combinatorially. We do this in the next two lemmas. For a tree $T$ whose vertex set is $[m]$, let
\begin{eqnarray*}
F_T &=& \left\{l_k e_{\sigma_j} - l_j e_{\sigma_k} \, : \, j<k \textrm{ and } jk \textrm{ is an edge of } T\right\}, \\
E_T &=& \left\{\frac{e_{\sigma_j}}{l_j} - \frac{e_{\sigma_k}}{l_k} \, : \, j<k \textrm{ and }  jk \textrm{ is an edge of } T\right\}.
\end{eqnarray*}

\begin{lemma}\label{lemma:bases}
The vector configuration
\[
F_\sigma :=  \left\{l_k e_{\sigma_j} - l_j e_{\sigma_k} \, : \, 1 \leq j<k \leq m\right\}
\]
has exactly $m^{m-2}$ bases: they are the sets $F_T$ as $T$ ranges over the spanning trees on $[m]$.
\end{lemma}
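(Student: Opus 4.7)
The plan is to transport the question to the familiar graphical setting in $\R^m$ via the isomorphism $\phi$ from Proposition \ref{prop: comb eq}, and then invoke the classical correspondence between bases of the type $A_{m-1}$ root system and spanning trees of $K_m$.

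First I will recall from \eqref{eq:phi} that the linear map $\phi: \R^m \to (\R^n)^\sigma$ defined by $\phi(f_i) = e_{\sigma_i}/l_i$ is a bijection between $\R^m$ and the span of $\{e_{\sigma_1}, \ldots, e_{\sigma_m}\}$, which is exactly the ambient space of $F_\sigma$. A direct computation gives
\[
\phi\bigl(l_j l_k (f_j - f_k)\bigr) = l_k e_{\sigma_j} - l_j e_{\sigma_k},
\]
so $\phi^{-1}(F_\sigma) = \{\, l_j l_k (f_j - f_k) \,:\, 1 \leq j < k \leq m\}$. Because $\phi$ is a linear isomorphism and each scalar $l_j l_k$ is positive, a subset $B \subseteq F_\sigma$ is a basis if and only if the corresponding subset of $\{f_j - f_k : 1 \leq j < k \leq m\}$ is a basis.

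Next I will use the standard fact that for any subset $E \subseteq \binom{[m]}{2}$, the vectors $\{f_j - f_k : jk \in E\}$ are linearly independent in $\R^m$ if and only if $E$ forms a forest on $[m]$. (The quick argument: any linear dependence corresponds to an assignment of weights to the edges such that the signed sum at each vertex is zero, which on a forest forces all weights to be zero; conversely a cycle yields an obvious dependence.) Since the full configuration $\{f_j - f_k : 1 \leq j < k \leq m\}$ spans the hyperplane $\{x : x_1 + \cdots + x_m = 0\}$ of dimension $m-1$, the bases are precisely the edge sets of spanning trees on $[m]$.

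Combining the two steps, the bases of $F_\sigma$ are in bijection with spanning trees $T$ on $[m]$, where $T$ corresponds to $F_T = \{l_k e_{\sigma_j} - l_j e_{\sigma_k} : j < k, \, jk \in E(T)\}$. By Cayley's formula there are exactly $m^{m-2}$ such trees, which is the desired count. The only mild subtlety is the positivity of the scalars $l_j l_k$, which is what allows us to pass linear independence back and forth through $\phi$ without change; apart from this, the argument is entirely standard.
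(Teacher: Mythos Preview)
Your proof is correct and follows essentially the same route as the paper: transport $F_\sigma$ through the isomorphism $\phi$ of \eqref{eq:phi} to the rescaled root configuration in $\R^m$, then invoke the classical tree--basis correspondence for $A_{m-1}$ together with Cayley's formula. The only difference is cosmetic: the paper cites \cite{Borchardt} for the forest/independence fact while you sketch it, and your remark that ``positivity'' of $l_jl_k$ is what preserves independence is slightly overstated (any nonzero scaling would do), but this does not affect the argument.
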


\begin{proof}
The vectors in $F_{\sigma}$ are positive scalar multiples of the vectors in
\[
E_\sigma = \left\{\frac{e_{\sigma_j}}{l_j} - \frac{e_{\sigma_k}}{l_k} \, : \, 1 \leq j<k \leq m\right\}, 
\]
which are the images of the vector configuration $A_{m-1}^+ = \{f_j - f_k \, : \, 1 \leq j < k \leq m\}$ under the bijective linear map $\phi$ of \eqref{eq:phi}.
 The set $A_{m-1}^+$ is a set of  positive roots for the Lie algebra $\mathfrak{gl}_m$; its bases are known \cite{Borchardt} to correspond to the spanning trees $T$ on $[m]$, and there are $m^{m-2}$ of them by Cayley's formula \cite{Cayley}. It follows that the bases of $F_\sigma$ are precisely the sets $F_T$ as $T$ ranges over those $m^{m-2}$ trees.
\end{proof}

\begin{lemma}\label{lemma:volparallelotope}
For any tree $T$ on $[m]$ we have
\begin{eqnarray*}
1. \quad \Vol(\square F_T) &=& \prod_{i=1}^m l_i^{\deg_T(i)} \Vol(E_T),  \\
2. \quad \Vol(\square E_T) &=& \frac{\gcd(l_1, \ldots, l_m)}{l_1 \cdots l_m},
\end{eqnarray*}
where $\deg_T(i)$ is the number of edges containing vertex $i$ in $T$.
\end{lemma}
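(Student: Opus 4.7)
Part 1 follows from a direct scaling argument. For each edge $jk \in E(T)$, the generator $l_k e_{\sigma_j} - l_j e_{\sigma_k}$ of $F_T$ equals $l_j l_k$ times the corresponding generator $e_{\sigma_j}/l_j - e_{\sigma_k}/l_k$ of $E_T$. Both parallelotopes $\square F_T$ and $\square E_T$ live in the same $(m-1)$-dimensional affine subspace, so they share the same primitive parallelotope, and the ratio of their normalized volumes equals the ratio of their Euclidean volumes. Since Euclidean volume scales multilinearly in the generators, this ratio is $\prod_{jk \in E(T)} l_j l_k = \prod_{i=1}^m l_i^{\deg_T(i)}$, using that each vertex $i$ lies in exactly $\deg_T(i)$ edges of $T$.

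For Part 2, the plan is to compute $\Vol(\square F_T)$ via Theorem \ref{thm:volzonotope}(2), which identifies it with the gcd of the $(m-1)\times(m-1)$ minors of the $n \times (m-1)$ matrix whose columns are the vectors in $F_T$. The key structural observation is that every vector in $F_T$ lies in $(\R^n)^\sigma$, hence is constant on the positions of each cycle of $\sigma$; thus any two rows of this matrix coming from the same cycle are identical, and any minor that picks two rows from a single cycle vanishes. Nonzero minors must therefore come from selecting one row from each of $m-1$ distinct cycles, and (up to sign) such a minor depends only on which cycle $i \in [m]$ is omitted; call its absolute value $|M_i|$.

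To compute $|M_i|$, I identify $(\R^n)^\sigma \cong \R^m$ via $e_{\sigma_r} \leftrightarrow f_r$ and factor the reduced $m \times (m-1)$ matrix $\tilde F_T$ (one row per cycle) as $\tilde F_T = D^{-1} B_T D'$, where $D = \mathrm{diag}(l_1,\ldots,l_m)$, $B_T$ is the signed vertex--edge incidence matrix of $T$ (with column $f_j - f_k$ for edge $jk$, $j<k$), and $D' = \mathrm{diag}(l_jl_k)_{jk\in E(T)}$. Deleting row $i$ commutes with the left factor $D^{-1}$ because it is diagonal, so
\[
|M_i| = \frac{1}{\prod_{r \neq i} l_r}\cdot \bigl|\det (B_T)_{\widehat i}\bigr|\cdot \prod_{jk \in E(T)} l_jl_k = \frac{l_i\,\prod_r l_r^{\deg_T(r)}}{\prod_r l_r},
\]
using the classical fact that the reduced incidence matrix of a tree is unimodular, i.e., $|\det (B_T)_{\widehat i}| = 1$. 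Taking the gcd over $i \in [m]$ pulls out the common factor, yielding $\Vol(\square F_T) = \gcd(l_1,\ldots,l_m)\,\prod_r l_r^{\deg_T(r)}/\prod_r l_r$. Dividing by $\prod_i l_i^{\deg_T(i)}$ via Part 1 produces the desired formula for $\Vol(\square E_T)$.

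The main obstacle is the bookkeeping around the non-square factorization $\tilde F_T = D^{-1} B_T D'$ and the verification that the nonzero $(m-1)\times(m-1)$ minors of the $n$-row integer matrix $F_T$ agree (up to sign) with the corresponding minors of the reduced $m$-row matrix $\tilde F_T$; once this is in place, unimodularity of trees finishes the computation and the rest is arithmetic.
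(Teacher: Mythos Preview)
Your Part 1 matches the paper exactly. Your Part 2 is correct but proceeds differently from the paper. The paper first argues that $\Vol(\square E_T)$ is independent of $T$: since $E_T=\phi(A_T)$ under the linear isomorphism $\phi\colon\R^m\to(\R^n)^\sigma$ and the root configuration $\{f_j-f_k\}$ is unimodular, all $\square A_T$ have the same (unit) volume, hence all $\square E_T$ have equal volume. It then computes $\Vol(\square F_T)$ for the single tree $T=\mathrm{Claw}_m$, where the $m\times(m-1)$ reduced matrix is explicit and its maximal minors are $l_m^{m-2}l_1,\ldots,l_m^{m-2}l_{m-1},l_m^{m-1}$; their gcd is $l_m^{m-2}\gcd(l_1,\ldots,l_m)$, and Part 1 finishes.

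You instead compute $\Vol(\square F_T)$ for an arbitrary tree by factoring the reduced matrix as $\tilde F_T=D^{-1}B_TD'$ and invoking unimodularity of the reduced incidence matrix $(B_T)_{\widehat i}$ to evaluate every maximal minor as $|M_i|=l_i\prod_r l_r^{\deg_T(r)-1}$, then take the gcd over $i$. This is a clean, direct route that yields the general-tree formula $\Vol(\square F_T)=\gcd(l_1,\ldots,l_m)\prod_r l_r^{\deg_T(r)-1}$ in one stroke, without the separate ``independence of $T$'' step. The paper's approach trades that generality for a very concrete minor computation on a single tree. Both arguments rest on the same unimodularity fact, just deployed at different points. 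The ``bookkeeping obstacle'' you flag---that duplicate rows may be deleted without changing the gcd of maximal minors, since minors using two identical rows vanish---is the same observation the paper uses and is not a genuine difficulty.
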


\begin{proof}
1. Since $l_k e_{\sigma_j} - l_j e_{\sigma_k} = l_jl_k  \big(\frac{e_{\sigma_j}}{l_j} - \frac{e_{\sigma_k}}{l_k}\big)$ for each edge $jk$ of $T$, and volumes scale linearly with respect to each edge length of a parallelotope, we have
\begin{eqnarray*}
\Vol(\square F_T) &=&  \Big(\prod_{jk \textrm{ edge of T}} l_jl_k \Big) \Vol(\square E_T)  \\
&=& 
\prod_{i=1}^m l_i^{\deg_T(i)} \Vol(\square E_T) 
\end{eqnarray*}
as desired.

\bigskip

2. 
The parallelotopes $\square E_T$ are the images of the parallelotopes $\square A_T$ under  the bijective linear map $\phi$ of \eqref{eq:phi}, where
\[
A_T := \{f_j - f_k \, : \, j<k , \, jk \textrm{ is an edge of } T\}.
\]
Since the vector configuration $\{f_j - f_k \, : \, 1 \leq j<k \leq m\}$ is unimodular \cite{Seymour}, all parallelotopes $\square A_T$ have unit volume. Therefore, the parallelotopes $\square E_T = \phi(\square A_T)$ have the same normalized volume, so $\Vol(E_T)$ is independent of $T$.

It follows that we can use any tree $T$ to compute $\Vol(E_T)$ or, equivalently, $\Vol(F_T)$. We choose the tree $T=\textrm{Claw}_m$ with edges $1m, 2m, \ldots, (m-1)m$. Writing the $m-1$ vectors of 
\[
F_{\textrm{Claw}_m} = \{l_me_{\sigma_i} - l_i e_{\sigma_m} \, : \, 1 \leq i \leq m-1\}
\]
as the columns of an $n \times (m-1)$ matrix, then $\Vol(F_{\textrm{Claw}_m})$ is the greatest common divisor of the non-zero maximal minors of this matrix. This quantity does not change when we remove duplicate rows; the result is the $m \times (m-1)$ matrix
\[
\begin{bmatrix}
l_m & 0 & 0 & \cdots & 0 \\
0 & l_m & 0 & \cdots & 0 \\
0 & 0 & l_m & \cdots & 0 \\
\vdots & \vdots & \vdots & \ddots & \vdots \\
0 & 0 & 0 & \cdots & l_m  \\
-l_1 & -l_2 & -l_3 & \cdots & -l_{m-1}
\end{bmatrix}.
\]
This matrix has $m$ maximal minors, whose absolute values equal $l_m^{m-2}l_1, l_m^{m-2}l_2, \ldots l_m^{m-2}l_{m-1}, l_m^{m-1}$. Therefore,
\[
\Vol(\square F_{\textrm{Claw}_m}) = l_m^{m-2} \gcd(l_1,\ldots, l_{m-1}, l_m)
\]
and part 1 then implies that
\[
\Vol(\square E_{\textrm{Claw}_m}) = \frac{\Vol(\square F_{\textrm{Claw}_m})}{l_1 \cdots l_{m-1} l_m^{m-1}}  = \frac{\gcd(l_1,\ldots, l_m)}{l_1 \cdots l_m}
\]
as desired.
\end{proof}

\bigskip

\begin{lemma} \label{lemma:trees}
For any positive integer $m \geq 2$ and unknowns $x_1, \ldots, x_m$, we have
\[
\sum_{T \textrm{ tree on } [m]} \, \prod_{i=1}^m x_i^{\deg_T(i)-1} = (x_1+\cdots+x_m)^{m-2}.
\]
\end{lemma}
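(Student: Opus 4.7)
The plan is to prove this identity, a weighted refinement of Cayley's formula $m^{m-2}$, via Pr\"ufer sequences. Recall that the Pr\"ufer code gives a bijection
\[
\textrm{Pr\"ufer} \, : \, \{\textrm{trees on } [m]\} \,\, \longrightarrow \,\, [m]^{m-2}
\]
which sends a tree $T$ to a sequence $(a_1, \ldots, a_{m-2})$ constructed by iteratively removing the smallest leaf of $T$ and recording its unique neighbor.

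First I would recall the key combinatorial property of this bijection: in the Pr\"ufer sequence of $T$, each vertex $i \in [m]$ appears exactly $\deg_T(i) - 1$ times. (This is the standard fact that justifies Cayley's formula; it follows by induction on $m$, since removing a leaf $\ell$ decreases $\deg_T(\mathrm{neighbor}(\ell))$ by $1$ and records exactly one copy of $\mathrm{neighbor}(\ell)$ in the sequence.) In particular, both sides of the identity are homogeneous of degree $m-2$, since $\sum_i(\deg_T(i)-1) = 2(m-1) - m = m-2$.

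Using this property, the monomial attached to $T$ rewrites as
\[
\prod_{i=1}^m x_i^{\deg_T(i)-1} = x_{a_1} x_{a_2} \cdots x_{a_{m-2}},
\]
where $(a_1,\ldots,a_{m-2})$ is the Pr\"ufer sequence of $T$. Summing over all trees on $[m]$, and using that Pr\"ufer is a bijection onto $[m]^{m-2}$, we obtain
\[
\sum_{T \textrm{ tree on } [m]} \prod_{i=1}^m x_i^{\deg_T(i)-1} = \sum_{(a_1,\ldots,a_{m-2}) \in [m]^{m-2}} x_{a_1} \cdots x_{a_{m-2}} = (x_1 + \cdots + x_m)^{m-2},
\]
where the last equality is the standard expansion of a power of a sum.

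There is no real obstacle here: the entire argument hinges on citing (or briefly recalling) the Pr\"ufer bijection together with the degree-occurrence property. Setting all $x_i = 1$ recovers Cayley's $m^{m-2}$ as a sanity check, and the $m=2$ base case ($x_1^0 x_2^0 = 1 = (x_1+x_2)^0$) is immediate from the unique tree on two vertices.
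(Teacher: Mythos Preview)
Your proof is correct, but it takes a different route from the paper's. The paper does not invoke Pr\"ufer sequences; instead it cites the rooted-tree identity \cite[Theorem 5.3.4]{EC2}
\[
\sum_{(T,r)\ \textrm{rooted tree on }[m]} \ \prod_{i=1}^m x_i^{\children_{(T,r)}(i)} = (x_1+\cdots+x_m)^{m-1},
\]
notes that $\children_{(T,r)}(i)=\deg_T(i)-1$ for $i\neq r$ and $\deg_T(r)$ for $i=r$, and then factors the left side as $\big(\sum_T \prod_i x_i^{\deg_T(i)-1}\big)(x_1+\cdots+x_m)$ by summing over the $m$ choices of root. Dividing by $x_1+\cdots+x_m$ gives the lemma. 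Your Pr\"ufer argument is more self-contained and directly bijective, whereas the paper's argument is a quick reduction to a standard reference; both are short, and in fact the cited rooted-tree identity is itself commonly proved via Pr\"ufer codes, so the two approaches are closely related underneath.
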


\begin{proof}
We derive this from the analogous result for rooted trees \cite[Theorem 5.3.4]{EC2}, which states that
\[
\sum_{(T,r) \textrm{ rooted } \atop \textrm{ tree  on } [m]} \prod_{i=1}^m x_i^{\children_{(T,r)}(i)} = (x_1+\cdots+x_m)^{m-1} 
\]
where $\children_{(T,r)}(v)$ counts the children of $v$; that is, the neighbors of $v$ which are not on the unique path from $v$ to the root $r$.

Notice that 
\[
\children_{(T,r)}(i) = \begin{cases}
\deg_T(i)-1 & \textrm{ if } i \neq r,\\
\deg_T(i) & \textrm{ if } i = r.
\end{cases}
\]
Therefore,
\begin{eqnarray*}
\sum_{(T,r) \textrm{ rooted } \atop \textrm{ tree on } [m]} \prod_{i=1}^m x_i^{\children_{(T,r)}(i)} & = & 
\sum_{r=1}^m\Bigg(\sum_{(T,r) \textrm{ tree on } [m] \atop \textrm{ rooted at } r} x_r \prod_{i=1}^m x_i^{\deg_T(i)-1}\Bigg)   \\
& = &
\Bigg(\sum_{T \textrm{ tree on } [m]} \prod_{i=1}^m x_i^{\deg_T(i)-1}\Bigg) (x_1+\cdots+x_m)
\end{eqnarray*}
from which the desired result follows.
\end{proof}

\begin{reptheorem}{thm:main}
If $\sigma$ is a permutation of $[n]$ whose cycles have lengths $l_1, \ldots, l_m$, then the normalized volume of the slice of $\Pi_n$ fixed by $\sigma$ is 
\[
\Vol\,  \Pi_n^\sigma = n^{m-2} \gcd(l_1, \ldots, l_m).
\]
\end{reptheorem}

%

\begin{proof}
Since $\Pi_n^\sigma$ is a translate of the zonotope for the lattice vector configuration
\[
F_\sigma :=  \left\{l_k e_{\sigma_j} - l_j e_{\sigma_k} \, : \, 1 \leq j<k \leq m\right\}, 
\]
we invoke Theorem \ref{thm:volzonotope}. Using Lemmas \ref{lemma:bases}, \ref{lemma:volparallelotope}, and \ref{lemma:trees}, it follows that
\begin{eqnarray*}
\Vol \,\Pi_n^\sigma &=& \sum_{T \textrm{ tree on } [m]} \Vol(\square F_T)\\
&=& \sum_{T \textrm{ tree on } [m]} \prod_{i=1}^m l_i^{\deg_T(i)-1} \gcd(l_1, \ldots, l_m) \\
&=& (l_1+\cdots+l_m)^{m-2} \gcd(l_1, \ldots, l_m),
\end{eqnarray*}
as desired.
\end{proof}

When $\sigma = \id$ is the identity permutation, the fixed polytope is $\Pi_n^\id = \Pi_n$, and we recover Stanley's result that $\Vol \,\Pi_n = n^{n-2}$. \cite{Stanleyzonotope}

\section{Closing remarks}\label{sec:closing}

\subsection{Equivariant triangulations of the prism}

Gelfand, Kapranov, and Zelevinsky \cite{GKZ} introduced the \emph{secondary polytope}, an $(n-d-1)$-dimensional polytope $\Sigma(P)$ associated to a point configuration $P$ of $n$ points in dimension $d$. The vertices of $\Sigma(P)$ correspond to the \emph{regular triangulations} of $P$, and more generally, the faces of $\Sigma(P)$ correspond to the \emph{regular subdivisions} of $P$. Furthermore, face inclusion in $\Sigma(P)$ corresponds to refinement of subdivisions. 

The permutahedron $\Pi_n$ is the secondary polytope of the prism $\Delta_{n-1} \times I$ over the $n$-simplex. In fact, all subdivisions of the prism $\Delta_{n-1} \times I$ are regular, so the faces of the permutahedron are in order-preserving bijection with the ways of subdividing the prism $\Delta_{n-1} \times I$.

When the polytope $P$ is invariant under the action of a group $G$, Reiner \cite{Reinerequivariant} introduced the \emph{equivariant secondary polytope} $\Sigma^G(P)$, whose faces correspond to the $G$-invariant subdivisions of $P$. We call such a subdivision \emph{fine} if it cannot be further refined into a $G$-invariant subdivision.

This equivariant framework applies to our setting, since a permutation $\sigma \in S_n$ acts naturally on the prism $\Delta_{n-1} \times I$ and on the permutahedron $\Pi_n$. The following is a direct consequence of \cite[Theorem 2.10]{Reinerequivariant}. 

\begin{proposition}
The fixed polytope $\Pi_n^\sigma$ is the equivariant secondary polytope for the triangular prism $\Delta_{n-1} \times I$ under the action of $\sigma$. \end{proposition}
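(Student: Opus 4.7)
The plan is to derive this statement as a direct application of Reiner's framework, doing little beyond verifying that the hypotheses are met. The starting point is the classical fact, recalled in the paragraph preceding the statement, that $\Pi_n = \Sigma(\Delta_{n-1} \times I)$, so faces of the permutahedron are in order-preserving bijection with the (necessarily regular) subdivisions of the prism. The symmetric group $S_n$ acts on the prism $\Delta_{n-1} \times I$ by permuting the $n$ vertices of $\Delta_{n-1}$, and by functoriality of the secondary polytope construction this action is carried to an action on $\Sigma(\Delta_{n-1} \times I)$. I would first check, by tracking coordinates, that under the identification $\Pi_n = \Sigma(\Delta_{n-1} \times I)$ this induced action is precisely the coordinate-permutation action of $S_n$ on $\Pi_n \subset \R^n$ used throughout the paper.

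Next, I would invoke Reiner's Theorem 2.10, which constructs the equivariant secondary polytope $\Sigma^G(P)$ of a $G$-invariant point configuration as the subpolytope of $\Sigma(P)$ fixed pointwise by $G$, with the property that its faces correspond to the $G$-invariant subdivisions of $P$ (and its vertices to the fine $G$-invariant subdivisions). Applying this with $G = \langle \sigma \rangle$ and $P = \Delta_{n-1} \times I$ yields
\[
\Sigma^{\langle \sigma \rangle}(\Delta_{n-1} \times I) \,=\, \Sigma(\Delta_{n-1} \times I)^{\sigma} \,=\, \Pi_n^{\sigma},
\]
where the last equality is the content of Definition 1.1 combined with the identification above.

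The only point to double-check is that Reiner's notion of $G$-fixed subpolytope of $\Sigma(P)$ coincides with our Definition 1.1 of $\Pi_n^\sigma$; this is immediate once the $S_n$-actions have been matched, because in both formulations a point is fixed exactly when $\sigma \cdot x = x$ in the ambient space. I do not anticipate a main obstacle here: the substantive content resides entirely in Reiner's theorem, and the role of this proposition is to record the additional interpretation that $\Pi_n^\sigma$ is a moduli space for $\sigma$-invariant subdivisions of the prism $\Delta_{n-1} \times I$.
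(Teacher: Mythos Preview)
Your proposal is correct and follows exactly the paper's approach: the paper states the proposition as ``a direct consequence of \cite[Theorem 2.10]{Reinerequivariant}'' with no further argument, and your proof simply spells out the verification that Reiner's hypotheses apply and that the induced $S_n$-action on $\Sigma(\Delta_{n-1}\times I)$ matches the coordinate-permutation action on $\Pi_n$. There is nothing to add.
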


Thus, bearing in mind that the  faces of the $m$-permutahedron are in order-preserving bijection with the ordered set partitions of $[m]$, our Theorem \ref{thm:big} has the following consequence.

\begin{corollary}
The poset of $\sigma$-invariant subdivisions of the prism $\Delta_{n-1} \times I$ is isomorphic to the poset of ordered set partitions of $[m]$, where $m$ is the number of cycles of $\sigma$. In particular, the number of finest $\sigma$-invariant subdivisions is $m!$.
\end{corollary}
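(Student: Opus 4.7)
The plan is to chain together three bijections, each of which is already available to us. By the preceding Proposition, $\Pi_n^\sigma$ is the equivariant secondary polytope $\Sigma^\sigma(\Delta_{n-1} \times I)$. Reiner's theorem \cite[Thm.~2.10]{Reinerequivariant} tells us that the faces of the equivariant secondary polytope are in order-preserving bijection (with respect to reverse inclusion on one side, refinement on the other) with the $\sigma$-invariant subdivisions of $\Delta_{n-1} \times I$. Thus the poset we want to understand is isomorphic to the face poset of $\Pi_n^\sigma$.

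Next I would invoke Theorem \ref{thm:big}, which shows $\Pi_n^\sigma$ is combinatorially isomorphic to the $m$-permutahedron $\Pi_m$. Combinatorial isomorphism is exactly the statement that the face posets agree, so the face poset of $\Pi_n^\sigma$ is the face poset of $\Pi_m$. Finally, the classical fact recalled in the paragraph preceding the corollary identifies the face poset of $\Pi_m$ with the poset of ordered set partitions of $[m]$ (ordered by refinement, with finer partitions corresponding to smaller faces). Concatenating these three order-preserving bijections gives the claimed isomorphism.

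For the count of finest $\sigma$-invariant subdivisions: the finest elements of the poset of $\sigma$-invariant subdivisions correspond to the vertices of $\Pi_n^\sigma$, which under the combinatorial isomorphism with $\Pi_m$ correspond to the vertices of $\Pi_m$. Those are indexed by ordered set partitions of $[m]$ into singletons, i.e., by permutations of $[m]$, of which there are $m!$. (Alternatively, one can read this off directly from Definition \ref{def:v} and Theorem \ref{thm:big}, which exhibit the $m!$ $\sigma$-vertices indexed by linear orderings $\prec$ of the cycles.)

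The only subtlety worth checking carefully is the direction of the order in each of the three bijections, so that the composite really is an isomorphism of posets rather than an anti-isomorphism: Reiner's bijection sends face inclusion in $\Sigma^G(P)$ to refinement of subdivisions of $P$, the combinatorial isomorphism of Theorem \ref{thm:big} preserves face inclusion, and the standard bijection between faces of $\Pi_m$ and ordered set partitions of $[m]$ sends face inclusion to refinement. Tracking these conventions is the only step that requires care; no further computation is needed.
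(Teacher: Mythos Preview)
Your proposal is correct and follows exactly the approach the paper intends: the paper treats the corollary as an immediate consequence of the preceding Proposition, Reiner's description of the equivariant secondary polytope, Theorem~\ref{thm:big}, and the classical identification of faces of $\Pi_m$ with ordered set partitions of $[m]$. Your write-up simply makes this chain of bijections explicit and checks the order conventions, which is more detail than the paper supplies but entirely in the same spirit.
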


It is possible to describe the equivariant subdivisions of the prism combinatorially; we hope this will be a fun exercise for the interested reader.

\subsection{Slices of $\Pi_n$ fixed by subgroups of $S_n$}

One might ask, more generally, for the subset of $\Pi_n$ fixed by a subgroup of $H$ in $S_n$; that is, 
\[
\Pi_n^H = \{x \in \Pi_n \, : \, \sigma \cdot x = x \,\, \mathrm{ for \,\, all } \,\, \sigma \in H\}.
\]
It turns out that this more general definition leads to the same family of fixed polytopes of $\Pi_n$.

\begin{lemma}\label{lemma:fixedbyH}
For every subgroup $H$ of $S_n$ there is a permutation $\sigma$ of $S_n$ such that $\Pi_n^H = \Pi_n^\sigma$.
\end{lemma}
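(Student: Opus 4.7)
The plan is to reduce the statement to Lemma \ref{lemma:eqs} by observing that both $\Pi_n^H$ and $\Pi_n^\sigma$ are determined by the partition of $[n]$ they impose via coordinate equalities.

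First I would write $\Pi_n^H = \bigcap_{h \in H} \Pi_n^h$ directly from the definition. Then, applying Lemma \ref{lemma:eqs} to each factor, I would observe that $x \in \Pi_n^H$ if and only if $x \in \Pi_n$ and $x_j = x_k$ whenever there exists some $h \in H$ such that $j$ and $k$ lie in a common cycle of $h$. The key set-theoretic observation is that this relation on $[n]$ coincides with the equivalence relation ``$j$ and $k$ lie in the same $H$-orbit'': if $h \in H$ has $j,k$ in the same cycle, then a power of $h$ sends $j$ to $k$, so $j$ and $k$ lie in the same orbit; conversely, if some $h \in H$ satisfies $h(j)=k$, then $j$ and $k$ automatically lie in a common cycle of $h$.

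Next I would let $P_1, \ldots, P_m$ denote the orbits of $H$ on $[n]$ and choose $\sigma \in S_n$ to be any permutation whose cycles are precisely $P_1, \ldots, P_m$ (for instance, cycling each $P_i$ in increasing order). By construction, the equalities $x_j = x_k$ imposed by the cycles of $\sigma$ are exactly the equalities imposed by the $H$-orbits. Invoking Lemma \ref{lemma:eqs} once more, this time for $\sigma$, we conclude $\Pi_n^\sigma = \Pi_n^H$.

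There is no serious obstacle here: the argument is essentially a translation between the orbit partition of $H$ and the cycle partition of a single well-chosen permutation, and both descriptions of the fixed set reduce, via Lemma \ref{lemma:eqs}, to the same collection of hyperplanes $x_j = x_k$. The only point that deserves to be spelled out is the equivalence between ``co-cyclic in some $h \in H$'' and ``co-orbital in $H$'', which I would state explicitly to make the reduction transparent.
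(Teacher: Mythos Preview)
Your proposal is correct and follows essentially the same approach as the paper: both arguments identify, via Lemma \ref{lemma:eqs}, the partition of $[n]$ into blocks on which the coordinates of a fixed point must agree, and then choose $\sigma$ to have exactly those blocks as its cycles. The only cosmetic difference is that the paper phrases this partition as the join $\pi_1 \vee \cdots \vee \pi_r$ of the cycle partitions of a generating set for $H$, whereas you describe it directly as the $H$-orbit partition; these are the same partition.
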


\begin{proof}
Let $\{\sigma_1,\dots,\sigma_r\}$ be a set of generators for $H$. Notice that a point $p \in \R^n$ is fixed by $H$ if and only if it is fixed by each one of these generators. For each generator $\sigma_t$, the cycles of $\sigma_t$ form a set partition $\pi_t$ of $[n]$. Furthermore, a point $x \in \R^n$ is fixed by $\sigma_t$ if and only if $x_j=x_k$ whenever $j$ and $k$ are in the same part of $\pi_t$. 

Let $\pi = \pi_1 \vee \cdots \vee \pi_r$ in the lattice of partitions of $[n]$; the partition $\pi$ is the finest common coarsening of $\pi_1, \ldots, \pi_r$. Then $x \in \R^n$ is fixed by each one of the generators of $H$ if and only if $x_j=x_k$ whenever $j$ and $k$ are in the same part of $\pi$. Therefore, we may choose any permutation  $\sigma$ of $[n]$ whose cycles are supported on the parts of $\pi$, and we will have $\Pi_n^H = \Pi_n^\sigma$, as desired.
\end{proof} 

\begin{example}
Consider the subset of  $\Pi_9$ fixed by the subgroup $H = \langle (173)(46)(89), (27)(68)\rangle$ of $S_9$. To be fixed by the two generators of $H$,  a point $x \in \R^9$ must satisfy
\begin{eqnarray*}
\sigma_1 = (173)(46)(89):& &  x_1=x_7=x_3, \quad x_4=x_6, \quad x_8=x_9, \\
\sigma_2 = (27)(68): & & x_2=x_7, \quad x_6=x_8,
\end{eqnarray*}
corresponding to the partitions $\pi_1 = 137|2|46|5|89$ and $\pi_2 = 1|27|3|4|5|68|9$.
Combining these conditions gives
\[
x_1=x_2=x_3=x_7,\quad  x_4=x_6=x_8=x_9,
\]
which corresponds to the join $\pi_1 \vee \pi_2 = 1237|4689|5$. For any permutation $\sigma$ whose cycles are supported on the parts of  $\pi_1 \vee \pi_2 $, such as $\sigma=(1237)(4689)$, we have $\Pi_9^H = \Pi_9^\sigma$.
\end{example}

\subsection{Lattice point enumeration and equivariant Ehrhart theory}

Theorem \ref{thm:main} is the first step towards describing the equivariant Ehrhart theory of the permutahedron, a question posed by Stapledon \cite{Stapledon}. To carry out this larger project, we need to compute the Ehrhart quasipolynomial of $\Pi_n^\sigma$, which counts the lattice points in its integer dilates:
\[
L_{\Pi_n^\sigma}(t) := \left| \, t \, \Pi_n^\sigma \cap \Z^n \right| \qquad \textrm{ for } t \in \N.
\]
New difficulties arise in this question; let us briefly illustrate some of them.

When all cycles of $\sigma$ have odd length, Theorem \ref{thm:big}.3 shows that $\Pi_n^\sigma$ is a lattice zonotope. In this case, it is not much more difficult to give a combinatorial formula for the Ehrhart polynomial, using the fact that $L_{\Pi_n^\sigma}(t)$ is an evaluation of the arithmetic Tutte polynomial of the corresponding vector configuration \cite{Ardilasurvey, D'AdderioMoci.Ehrhart}. 

In general, $\Pi_n^\sigma$ is a half-integral zonotope. Therefore, the even part of its Ehrhart quasipolynomial is also an evaluation of an arithmetic Tutte polynomial, and can be computed as above. However, the odd part of its Ehrhart quasipolynomial is more subtle. If we translate $\Pi_n^\sigma$ to become integral, we can lose and gain lattice points in the interior and on the boundary, in ways that depend on number-theoretic properties of the cycle lengths.

Some of these subtleties already arise in the simple case when $\Pi_n^\sigma$ is a segment; that is, when $\sigma$ has only two cycles of lengths $l_1$ and $l_2$. For even $t$, we simply have
\[
L_{\Pi_n^\sigma}(t) = 
\gcd(l_1,l_2)t + 1. 
\]
However, for odd $t$ we have
\[
L_{\Pi_n^\sigma}(t) = 
\begin{cases}
\gcd(l_1,l_2)t + 1 & \textrm{if $l_1$ and $l_2$ are both odd}, \\
\gcd(l_1,l_2)t & \textrm{if $l_1$ and $l_2$ have different parity}, \\
\gcd(l_1,l_2)t & \textrm{if $l_1$ and $l_2$ are both even and they have the same $2$-valuation}, \\
0 & \textrm{if $l_1$ and $l_2$ are both even and they have different $2$-valuations,}
\end{cases}
\]
where the \emph{$2$-valuation} of a positive integer is the highest power of $2$ dividing it. 

In higher dimensions, additional obstacles arise. 
Describing the equivariant Ehrhart theory of the permutahedron is the subject of an upcoming project.

\section{Acknowledgments}

Some of the results of this paper are part of the Master's theses of AS (under the supervision of FA) and ARVM (under the supervision of FA and Matthias Beck) at San Francisco State University \cite{Schindlerthesis, Vindasthesis}.  
We are grateful to Anastasia Chavez, John Guo, Andr\'es Rodr\'{\i}guez, and Nicole Yamzon for their valuable feedback during our group research meetings, and the mathematics department at SFSU for providing a wonderful environment to produce this work. We are also thankful to the referees, whose suggestions helped us improve the presentation. In particular, one of the referees pointed out the connection with equivariant secondary polytopes.
Part of this project was carried out while FA was a Simons Research Professor at the Mathematical Sciences Research Institute; he thanks the Simons Foundation and MSRI for their support.
ARVM thanks Matthias Beck and Benjamin Braun for the support and fruitful conversations. 

\small

\bibliographystyle{amsplain}
\bibliography{references}

\end{document}